\documentclass[reqno,12pt]{amsart}
\usepackage{amsmath,amssymb,amsthm,mathtools}
\usepackage{mathrsfs}
\usepackage{bm}
\usepackage{tikz-cd}
\usepackage{xcolor}
\usepackage{hyperref}
\usepackage{comment}
\usepackage{orcidlink}
\hypersetup{
    colorlinks=true,
    linkcolor=black,
    citecolor=black,
    urlcolor=black}
\usepackage[margin=1.2in]{geometry}
\newtheorem{theorem}{Theorem}[section]
\newtheorem{lemma}[theorem]{Lemma}
\newtheorem{proposition}[theorem]{Proposition}

\theoremstyle{definition}
\newtheorem*{thmA}{Theorem A}
\newtheorem{definition}[theorem]{Definition}

\allowdisplaybreaks

\newcommand{\Z}{\mathbb{Z}}
\newcommand{\Q}{\mathbb{Q}}

\newcommand{\FF}{{\mathcal{F}}}
\newcommand{\p}{\mathfrak{p}}

\newcommand{\OO}{\mathcal{O}}

\newcommand{\Gal}{\mathrm{Gal}}
\newcommand{\Hom}{\mathrm{Hom}}

\title{Euler characteristic of the Selmer group attached to an Artin representation}

\author{Subhasis Panda}

\address{}

\email{subhasispanda559@gmail.com}

\subjclass[2020]{Primary 11R23; Secondary 11F80, 11R34.}

\keywords{Artin representations, Selmer groups, Euler characteristics,Tamagawa numbers.}

\begin{document}

\begin{abstract} {Longo and Vigni extended a result of Greenberg by giving a relation between the cardinality of the Selmer group and the characteristic power series of its Pontryagin dual over the cyclotomic $\Z_p$-extension for a $p$-adic Galois representation. In this paper, we extend this result to the case of Artin representations, following the framework of Greenberg and Vatsal.
\medskip
}\end{abstract}

\maketitle

\tableofcontents
\section{Introduction}
Let $E$ be an elliptic curve defined on a number field $F$. 
Assume that $E$ has good ordinary reduction in all primes of $F$ lying over $p$. 
Furthermore, assume that the $p$-primary Selmer group $\text{Sel}_p(E/F)$ of $E$ over $F$ is finite. 
For a cyclotomic $\Z_p$-extension $F_\infty$ of $F$ with Galois group $\Gamma:=\Gal(F_\infty/F)$, let $\Lambda:= \Z_p[[\Gamma]]$ denote the Iwasawa algebra. 
Since $\text{Sel}_p(E/F)$ is finite, $p$-primary Selmer group $\text{Sel}_p(E/F_\infty)$ of $E$ over $F_\infty$ is $\Lambda$-torsion. 
If $\mathfrak{F}_E \in \Lambda$ is a generator of the characteristic ideal of the Pontryagin dual of $\text{Sel}_p(E/F_\infty)$, then Greenberg \cite[Theorem 4.1]{GR5} proved the following
\begin{equation} \label{eq.1}
    \mathfrak{F}_E(0) \sim \frac{\#\text{Sel}_p(E/F) \cdot \displaystyle \prod_{v \mid p} \# \big ( \Tilde{E_v}(\mathbb{F}_v)_p \big )^2 \cdot \prod_{v \, \text{bad}} c_v(E)}{\# \big (  E(F)_p \big )^2},
\end{equation}
where the symbol $\sim$ indicates that the two quantities have the same $p$-adic valuation, $c_v(E)$
is the Tamagawa number of $E$, $\Tilde{E_v}(\mathbb{F}_v)_p$ is the $p$-torsion points of the reduction $\Tilde{E_v}$ of $E$ on the residue field $\mathbb{F}_v$ of $F$ at a prime $v$ and $E(F)_p$ is the $p$-torsion points of the Mordell-Weil group $E(F)$. Formula (\ref{eq.1}) is a special case of a result proved by Perrin-Riou\cite{PerrinRiou1984} when $E$ has complex multiplication, and by Schneider\cite{Schneider1983} in the general case. \\

This result of Greenberg has been generalized in \cite{LV} for a more general $p$-adic Galois representation, including those coming from  $p$-ordinary modular forms of weight at least $4$.  Following \cite{LV}, let $F$  be a number field with absolute Galois group $G_F=\Gal (\overline{F}/F)$ and an odd prime $p$, consider a $p$-ordinary representation
\begin{equation*}
    \sigma: G_F \to \text{Aut}_K(V) \simeq \text{GL}_{r}(K),
\end{equation*}
where $V$ is a $r$-dimensional vector space over a finite extension $K$ of $\Q_p$. 
Consider a $G_F$-stable $\OO$-lattice $T$ of $V$ and set $A:=V/T$, where $\OO$ is the ring of integers of $K$. One can associate the Greenberg Selmer group $\text{Sel}_{\text{Gr}}(A/F_\infty)$  over $F_\infty$ and the Bloch-Kato Selmer group \cite[Section-3]{LV} $\text{Sel}_{\text{BK}}(A/F)$  over $F$ to the representation $\sigma$.
These two groups are related by the natural restriction map 
\begin{equation*}
    \text{Sel}_{\text{BK}}(A/F) \to \text{Sel}_{\text{Gr}}(A/F_\infty)^\Gamma
\end{equation*}
with finite kernel and co-kernel. 
If we assume that $\text{Sel}_{\text{BK}}(A/F)$ is finite, then $\text{Sel}_{\text{Gr}}(A/F_\infty)$ becomes a co-torsion $\Lambda$-module. 
Let $\mathfrak{F}$ be a generator of the characteristic ideal of $\text{Sel}_{\text{Gr}}(A/F_\infty)^\vee$, then Longo-Vigni\cite[Theorem 6.1]{LV} proved the following result under certain assumptions \cite[Assumption 2.1]{LV} on the representation $\rho$. 

\begin{theorem}\normalfont{(Longo-Vigni \cite{LV})}
    If  \normalfont{Sel}$_{\text{BK}}(A/F)$ is finite, then $\mathfrak{F}(0) \neq 0$ and 
\begin{equation*}
   \big (  \OO / \mathfrak{F}(0) \cdot \OO \big ) = \#\text{Sel}_{\text{BK}}(A/F) \cdot \prod_{v \in \Sigma \,\, \\ v \nmid p} c_v(A)
\end{equation*}
where $c_v(A)$ is the $p$-part of the Tamagawa number of $A$ at $v$.
\end{theorem}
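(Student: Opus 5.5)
The plan follows Greenberg's Euler characteristic argument, adapted to the $p$-ordinary setting in the spirit of Longo--Vigni. Write $S_\infty=\text{Sel}_{\text{Gr}}(A/F_\infty)$ and fix a topological generator $\gamma$ of $\Gamma$, identifying $\Lambda\simeq\OO[[X]]$ with $X=\gamma-1$. The basic algebraic input is the following standard fact. If $X$ is a finitely generated torsion $\Lambda$-module with $X_\Gamma$ finite, then $X^\Gamma$ is also finite, $\mathfrak{F}(0)\neq0$, and
\[
\#\bigl(\OO/\mathfrak{F}(0)\OO\bigr)=\frac{\#X_\Gamma}{\#X^\Gamma}.
\]
This is proved by reducing to elementary modules via a pseudo-isomorphism; the finite kernel and cokernel cancel in the Herbrand-type quotient. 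Applied to $X=S_\infty^\vee$, the statement reduces to two tasks: proving that $S_\infty^\Gamma$ is finite, and computing the ratio $\#S_\infty^\Gamma/\#(S_\infty)_\Gamma$.

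The first task comes from the restriction map $\text{Sel}_{\text{BK}}(A/F)\to S_\infty^\Gamma$, which has finite kernel and cokernel. Since $\text{Sel}_{\text{BK}}(A/F)$ is finite, $S_\infty^\Gamma$ is finite, and hence so is $(S_\infty)_\Gamma$. To compute the two orders I would use the fundamental diagram, with rows
\[
0\to \text{Sel}(A/F)\to H^1(F_\Sigma/F,A)\to \prod_{v\in\Sigma}\mathcal{H}_v(F)
\]
and the analogous row over $F_\infty$ after taking $\Gamma$-invariants. I would compare them by the snake lemma. The global vertical map has kernel $H^1(\Gamma,A^{G_{F_\infty}})$ and cokernel inside $H^2(\Gamma,\cdot)=0$, so its contribution is $\#H^0(F,A)$ up to the usual cohomological dimension argument. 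The local maps are handled prime by prime:
\begin{itemize}
\item At $v\nmid p$, the kernel of $\mathcal{H}_v(F)\to\mathcal{H}_v(F_\infty)^\Gamma$ is $H^1_{\text{ur}}$-type cohomology of $A^{I_v}$ modulo its divisible part. Its order is exactly the $p$-part $c_v(A)$ of the Tamagawa number, since $v$ is finitely decomposed and unramified in $F_\infty$ with pro-$p$ decomposition group.
\item At $v\mid p$, one uses the ordinary filtration $0\to A_v^+\to A\to A_v^-\to0$ and compares the Bloch--Kato local condition $H^1_f$ with the Greenberg condition. Under the Longo--Vigni assumptions (no $H^0$ of $A_v^-$ terms, and $H^0(F_v,A^-)$-type terms trivial), I expect this local contribution to be trivial.
\end{itemize}

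Next I would show that the global-to-local map $H^1(F_\Sigma/F_\infty,A)\to\prod_v\mathcal{H}_v(F_\infty)$ is surjective. The argument uses the Poitou--Tate sequence together with the $\Lambda$-cotorsion of $S_\infty$ and the weak Leopoldt statement $H^2(F_\Sigma/F_\infty,A)=0$. Surjectivity lets one identify $(S_\infty)_\Gamma$ with a cokernel term, via $H^1(\Gamma,\cdot)$ of the defining sequence of $S_\infty$ and the vanishing of $H^2(F_\Sigma/F,\cdot)$-type obstructions, in the manner of Greenberg's lemma $\#S_\infty^\Gamma/\#(S_\infty)_\Gamma=\#\ker\cdot\#\text{coker}^{-1}\cdots$. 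Collecting the snake lemma terms gives
\[
\frac{\#(S_\infty)_\Gamma}{\#S_\infty^\Gamma}=\frac{\#\text{Sel}_{\text{BK}}(A/F)\prod_{v\nmid p}c_v(A)}{\#\text{Sel}_{\text{BK}}(A/F)}\cdot\#\text{Sel}_{\text{BK}}(A/F)\times(\text{terms killed by the assumptions}).
\]
The remaining factors, namely $H^0(F,A)$, $H^0(F_v,A_v^-)$ and the cokernel of the local map at $p$, are exactly those set to $1$ by Assumption 2.1 of \cite{LV}.

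The main obstacle will be the analysis at primes above $p$. There one must show that the Bloch--Kato finite part matches the Greenberg local condition after passing to $F_\infty$, and that the resulting index is trivial rather than a nontrivial power of torsion in $A_v^-$. I would first try to identify $H^1_f(F_v,V)$ with the image of $H^1(F_v,V_v^+)$, as is standard for ordinary representations of nonzero Hodge--Tate weights. After that I would control the integral discrepancy using $H^0(F_v,A_v^-)=0$ and the surjectivity of $H^1(F_v,A_v^+)\to H^1(F_{\infty,w},A_v^+)^\Gamma$.
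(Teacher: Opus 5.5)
The paper only cites this result from \cite{LV}, but its proof of the Artin analogue (Theorem~\ref{maintheorem}) follows the same template you use: Greenberg's lemma, the control diagram, and a prime-by-prime computation of $\ker(\phi)$ that gives $c_v(A)$ at $v\nmid p$. Measured against that template, there is a genuine gap: the step that makes the count exact is missing. The snake lemma only sees $\ker(\phi)\cap\operatorname{im}(\lambda)$ (for instance $\operatorname{coker}(r)\cong\ker(\phi)\cap\operatorname{im}(\lambda)$ when $s$ is an isomorphism), where $\lambda$ is the global-to-local map over the base field $F$. So your local kernels produce the full factor $\prod_{v\nmid p}c_v(A)$ in $\#S_\infty^\Gamma$ only if $\ker(\phi)\subset\operatorname{im}(\lambda)$. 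The paper secures this by proving that $\lambda$ itself is surjective (Lemma~\ref{surjective}). You only propose surjectivity over $F_\infty$. That route can work, since it identifies $(S_\infty)_\Gamma$ with $\operatorname{coker}(\mu)$ once $H^1(F_\Sigma/F_\infty,A)_\Gamma=0$. Carrying out the count then gives
\[
\frac{\#S_\infty^\Gamma}{\#(S_\infty)_\Gamma}=\frac{\#\mathrm{Sel}_{\mathrm{BK}}(A/F)\cdot\#\ker(\phi)}{\#\ker(s)\cdot\#\operatorname{coker}(\phi)\cdot\#\operatorname{coker}(\lambda)},
\]
and $\operatorname{coker}(\lambda)$ never appears in your list of factors to be killed. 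It is not automatically trivial: in Greenberg's elliptic-curve formula it is precisely one of the two factors $\#E(F)_p$. Removing it needs either a Poitou--Tate argument with the dual module (compare the vanishing of $H^0(G_\Q,A^*)$ in Lemma~\ref{3.2}) or a surjectivity criterion as in Lemma~\ref{surjective}.

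For $(S_\infty)_\Gamma$ the paper uses a cheaper argument that you never invoke. Finiteness of $S_\infty^\Gamma$ gives $X_\Gamma$ finite, hence $X^\Gamma$ finite since $X$ is torsion, hence $X^\Gamma=0$ because $X$ has no nonzero finite $\Lambda$-submodules (Lemma~\ref{4.4}). Your final bookkeeping is also not actually carried out. The displayed identity asserts $\#(S_\infty)_\Gamma/\#S_\infty^\Gamma=\#\mathrm{Sel}_{\mathrm{BK}}(A/F)\prod_{v\nmid p}c_v(A)$, which is upside down. Combined with your own lemma $\#(\OO/\mathfrak{F}(0)\OO)=\#X_\Gamma/\#X^\Gamma=\#S_\infty^\Gamma/\#(S_\infty)_\Gamma$, it would yield the reciprocal of the theorem.

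Finally, the triviality of the contribution at $p$ is only expected, not argued. In the paper's analogue this is a genuine step (Lemma~\ref{0.6}). It uses $H^2(G_{\p_n},T)=H^2(G_{\p_n},T_\wp^+)=0$, the injectivity of $H^1(I_{\p_n},T_\wp^-)\to H^1(I_{\p_n},V_\wp^-)$ forced by $H^0(I_{\p_n},A_\wp^-)=0$, and an inflation--restriction lift. This is exactly where hypotheses at $p$ must enter.
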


In this article, we have obtained a similar relation between the cardinality of the Selmer group  and the characteristic power series for an Artin representation $\rho$ under certain mild hypotheses. We now present our main result in more detail.

Throughout, $p$ denotes a fixed odd rational prime. We consider a finite Galois extension
$K/\mathbb{Q}$ with the Galois group $\Delta := \mathrm{Gal}(K/\mathbb{Q})$, such that $p \nmid [K:\mathbb{Q}]$. Let $\FF$ be a finite extension of $\mathbb{Q}_p$
with a ring of integers $\mathcal{O}$, and let $V$ be a $\FF$-vector space of dimension
$d(\rho)$. We then consider an irreducible Artin representation
\begin{equation*}
    \rho : \Delta \to \mathrm{GL}_{\FF}(V).
\end{equation*}
 
Without loss of generality, we assume $\mu_p \subset K$, where $\mu_p$ is the
group of $p$-th roots of unity; otherwise, we replace $K$ by $K' = K(\mu_p)$. We fix an embedding $\iota_\infty :
\overline{\mathbb{Q}} \hookrightarrow \mathbb{C}$ of a fixed algebraic closure
$\overline{\mathbb{Q}}$ of $\mathbb{Q}$ into $\mathbb{C}$, along with an embedding
$\iota_p : \overline{\mathbb{Q}} \hookrightarrow \overline{\mathbb{Q}}_p$ into a fixed
algebraic closure $\overline{\mathbb{Q}}_p$ of $\mathbb{Q}_p$.

Let $\nu$ be an Archimedean place of $K$, and let $K_\nu$ be the corresponding completion. Then $K_\nu$ is isomorphic to  $\mathbb{R}$ or $\mathbb{C}$. We write $\Delta_\nu := \mathrm{Gal}(K_\nu/\mathbb{R})$, which is a subgroup of $\Delta$. Let $d_\nu^+(\rho)$ denote the multiplicity of the trivial representation in the restriction $\rho|_{\Delta_\nu}$. This number does not depend on the choice of $\nu$, so we simply write $d^+(\rho)$. We then define $d^-(\rho) := d(\rho) - d^+(\rho)$. The case $d^+(\rho) = 1$ was studied in \cite{GV} in the context of $p$-adic $L$-functions. In \cite{MajumdarPanda2024}, the algebraic functional equation was studied for the case $1 \le d^+(\rho) \le d(\rho) - 1$. In this article, we focus on the associated Euler characteristics under the hypothesis $d^+(\rho) = 1$.

The embedding $\iota_p$ induces a prime $\wp$ of $K$ above $p$, and we write $\Delta_\wp$
for the decomposition group at $\wp$. Throughout this article, we fix an odd prime $p$ such
that $\mathbb{Q}(\mu_p) \subset K$ is a finite Galois extension of $\mathbb{Q}$ with
$p \nmid |\Delta|$. We work under the following assumptions on the irreducible representation $\rho : \Delta \to \mathrm{GL}_{\FF}(V)$, which are similar to those considered by Greenberg and Vatsal \cite{GV}.

\noindent\textbf{Assumption 1.}
The degree $[K:\Q]$ is prime to $p$ and $d^+(\rho)=1$. Moreover, there exists a one dimensional representation $\varepsilon_\wp$ of $\Delta_\wp$ occurring with multiplicity one in $\rho|_{\Delta_\wp}$. Equivalently, the maximal $\FF$-subspace $V_\wp^+ = V^{(\varepsilon_\wp)}$ of $V$ on which $\Delta_\wp$ acts via the character $\varepsilon_\wp$ is one-dimensional. 

 Throughout this paper, we work with the triple $(\rho,V,V_\wp^+)$ that satisfies the above assumptions. Let $\mathrm{Sel}^\dagger_{\Q_n}(A,A_\wp^+)$, for $\dagger \in \{ \text{g}, \text{GV} \}$, denote the Selmer groups over $\Q_n$ as defined in \cite[Section 2]{MajumdarPanda2024}. Similarly, we write $\mathrm{Sel}^\dagger_{\Q_\infty}(A,A_\wp^+)$ for the corresponding Selmer group over the cyclotomic $\Z_p$-extension $\Q_\infty$. Its Pontryagin dual is denoted by $X^\dagger_{\Q_\infty}(T,T_\wp^+)$. Let $\Gamma := \Gal(\Q_\infty/\Q) \cong \Z_p$, and let $\Lambda_\OO := \OO[[T]]$ be the Iwasawa algebra, identified with the power series ring in one variable over $\OO$. Then $X^\dagger_{\Q_\infty}(T,T_\wp^+)$ carries a natural structure of a $\Lambda_\OO$-module, and is finitely generated over $\Lambda_\OO$. In our situation, the roles of {Sel}$_{\text{BK}}(A/F)$ and $\text{Sel}_{\text{Gr}}(A/F_\infty)$  in \cite{LV} will be replaced by Sel$^{\text{g}}_{\Q}(A,A_\wp^+)$ and Sel$^{\text{GV}}_{\Q_\infty}(A,A_\wp^+)$, respectively. There is a natural restriction map Sel$^{\text{g}}_{\Q}(A,A_\wp^+) \to $ Sel$^{\text{GV}}_{\Q_\infty}(A,A_\wp^+)^\Gamma$ with finite kernel and cokernel under certain assumptions. It follows from \cite[Theorem~1 and Proposition~4.2]{GV} that $\mathrm{Sel}^{\mathrm{g}}_{\Q}(A,A_\wp^+)$ is finite and $\mathrm{Sel}^{\mathrm{GV}}_{\Q_\infty}(A,A_\wp^+)$ is a cofinitely generated cotorsion $\Lambda_{\OO}$-module. Let $\mathfrak{F}$ denote the characteristic power series of
$\mathrm{Sel}^{\mathrm{GV}}_{\mathbb{Q}_\infty}(A,A_\wp^+)^\vee$.
Our main result is the following.

\noindent \begin{thmA} \normalfont{(Theorem \ref{maintheorem})}
\label{A}
\begin{equation*}
  \# \bigl (  \OO / \mathfrak{F}(0) \cdot \OO \bigr ) = \#\text{Sel}^{\text{\,\normalfont{g}}}(A,A_\wp^+) \cdot \prod_{\substack {v \in \Sigma,\, v \nmid p}} c_v(A)
\end{equation*}
\vspace{-0.3cm}
where $c_v(A)$ is the Tamagawa number of $A$ at $v$.
\end{thmA}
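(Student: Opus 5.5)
The plan follows the Greenberg / Longo--Vigni strategy. The idea is to compute the $\Gamma$-Euler characteristic of $\mathcal{S}_\infty := \mathrm{Sel}^{\mathrm{GV}}_{\Q_\infty}(A,A_\wp^+)$ in two ways and compare.

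\emph{Algebraic side.} Since $\mathcal{S}_\infty$ is a cofinitely generated cotorsion $\Lambda_\OO$-module, a standard structure-theory argument gives the following: if $\mathcal{S}_\infty^\Gamma$ is finite, then $\mathcal{S}_{\infty,\Gamma}$ is finite and $\mathfrak{F}(0)\neq 0$. In that case
\begin{equation*}
\#\bigl(\OO/\mathfrak{F}(0)\OO\bigr)=\frac{\#\mathcal{S}_\infty^\Gamma}{\#\mathcal{S}_{\infty,\Gamma}}.
\end{equation*}
This reduces the theorem to two tasks:
\begin{enumerate}
\item compute $\#\mathcal{S}_\infty^\Gamma$ in terms of $\#\mathrm{Sel}^{\mathrm{g}}_{\Q}(A,A_\wp^+)$ and local terms;
\item show $\mathcal{S}_{\infty,\Gamma}=0$, or at least compute it.
\end{enumerate}

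\emph{Control theorem.} Write $\Sigma$ for a finite set of primes containing $p$, $\infty$ and the primes ramified in $K$. I would place the two Selmer groups in the fundamental diagram:
\begin{itemize}
\item the top row is $0\to \mathrm{Sel}^{\mathrm{g}}_{\Q}(A,A_\wp^+)\to H^1(\Q_\Sigma/\Q,A)\to \prod_{v\in\Sigma}\mathcal{H}_v$;
\item the bottom row is the $\Gamma$-invariants of the analogous sequence over $\Q_\infty$;
\item the vertical maps are restrictions.
\end{itemize}
The global restriction map has kernel and cokernel $H^i(\Gamma,A^{G_{\Q_\infty}})$ for $i=1,2$. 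Because $p\nmid|\Delta|$, $\rho$ is irreducible and $\mu_p\subset K$, I expect $A^{G_{\Q_\infty}}=0$ (since $d^+(\rho)=1$ we may assume $\rho$ is nontrivial). So the middle map is an isomorphism, and the snake lemma gives
\begin{equation*}
\#\mathcal{S}_\infty^\Gamma=\#\mathrm{Sel}^{\mathrm{g}}_{\Q}(A,A_\wp^+)\cdot\prod_{v\in\Sigma}\#\ker(r_v)\,\big/\,\#\mathrm{coker}(\text{global-to-local}),
\end{equation*}
up to the cokernel terms.

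\emph{Local kernels.}
\begin{itemize}
\item For $v\nmid p$, $v$ is finitely decomposed in $\Q_\infty$ and the unramified $\Z_p$-extension kills the unramified classes. Hence $\ker r_v\cong H^1_{\mathrm{ur}}(\Q_v,A)$ modulo the divisible part, and its order is $\#A^{I_v}/(A^{I_v})_{\mathrm{div}}$ taken on Frobenius-coinvariants, which is the Tamagawa number $c_v(A)$.
\item For $v=p$, the local condition is defined through $A_\wp^+$. Its kernel is governed by $H^1(\Gamma_p,(A/A_\wp^+)^{G_{\Q_{\infty,p}}})$. The characters of $\Delta_\wp$ have order prime to $p$, and $\mu_p\subset K$ means the cyclotomic character cannot coincide with them on the $p$-part. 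So I expect this invariant group to vanish, i.e.\ $\ker r_p=0$. This matches the absence of $p$-adic terms in Theorem A.
\end{itemize}

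\emph{Surjectivity and vanishing of coinvariants; the main obstacle.} The hardest step is to show two things:
\begin{itemize}
\item the localization map $H^1(\Q_\Sigma/\Q_\infty,A)\to\prod_v\mathcal{H}_v(\Q_\infty)$ is surjective;
\item $\mathcal{S}_{\infty,\Gamma}=0$.
\end{itemize}
For the first, I would apply Greenberg's criterion: surjectivity holds when $\mathcal{S}_\infty$ is $\Lambda$-cotorsion (known from \cite{GV}), $H^0(\Q_\infty,A^*)$ is finite, and the relevant $\Lambda$-corank count balances. The corank count uses $d^+(\rho)=1=\dim V_\wp^+$, which is exactly where Assumption 1 enters.

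For the second, I would use the fact that $H^2(\Q_\Sigma/\Q_\infty,A)=0$ (weak Leopoldt, given cotorsion). Then the dual Selmer group has no nonzero finite $\Lambda$-submodules, by Greenberg's argument with $A^{G_{\Q_\infty}}=0$. Combined with finiteness of $\mathcal{S}_\infty^\Gamma$, this forces $\mathcal{S}_{\infty,\Gamma}=0$.

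Surjectivity at the level of $\Q$, after taking $\Gamma$-invariants, then follows from $H^1(\Gamma,H^1(\Q_\Sigma/\Q_\infty,A))\hookrightarrow H^2(\Q_\Sigma/\Q,A)$ together with the Poitou--Tate comparison. Assembling the pieces gives
\begin{equation*}
\#\bigl(\OO/\mathfrak{F}(0)\OO\bigr)=\#\mathrm{Sel}^{\mathrm{g}}_{\Q}(A,A_\wp^+)\cdot\prod_{v\in\Sigma,\,v\nmid p}c_v(A).
\end{equation*}
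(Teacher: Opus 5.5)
Your outline matches the paper's own proof: Greenberg's lemma $\#(\OO/\mathfrak{F}(0)\OO)=\#S^\Gamma/\#S_\Gamma$, a control diagram whose middle map is an isomorphism because $A^{G_{\Q_\infty}}=A^{\Delta}=0$, Tamagawa factors $H^1_{\mathrm{GV}}/H^1_{\mathrm g}$ at $v\nmid p$, Greenberg's surjectivity criterion, and $S_\Gamma=0$ because $X$ has no nonzero finite submodules. The gap is the step $\ker r_p=0$, which fails as you argue it. The cyclotomic character plays no role for $A^-_\wp=A/A^+_\wp$, which is an Artin module. Since $[K_\wp:\Q_p]$ is prime to $p$ and $\Q_{\infty,p}/\Q_p$ is pro-$p$, one has $(A^-_\wp)^{G_{\Q_{\infty,p}}}=(A^-_\wp)^{\Delta_\wp}$, and $\Gamma_p$ acts trivially on it. So the group you identify is $\Hom(\Gamma_p,(A^-_\wp)^{\Delta_\wp})$. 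It is nonzero, of positive $\OO$-corank, as soon as the trivial character of $\Delta_\wp$ occurs in $V/V^+_\wp$, and Assumption 1 allows this. An example is $\rho$ dihedral and unramified at $p$, with $\rho(\mathrm{Frob}_p)$ having eigenvalues $\pm1$ and $\varepsilon_\wp$ the $(-1)$-eigencharacter.

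All of this group actually lies in $\ker r_p$. The map $H^1(\Q_p,A)\to H^1(\Q_p,A^-_\wp)$ is onto, because $H^2(\Q_p,A^+_\wp)$ is dual to $H^0(\Q_p,\Hom(A^+_\wp,\mu_{p^\infty}))=0$; this is where the cyclotomic character genuinely enters. The classes inflated from $\Gamma_p$ are nonzero on $I_p$ but vanish on $I_{\p_\infty}$. So if $\lambda$ is surjective, as you intend to prove, then $\mathrm{coker}(r)\cong\ker\phi$ is infinite. This is exactly the trivial-zero situation: $S^\Gamma$ is infinite, $\mathfrak{F}(0)=0$, and the formula fails.

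Some hypothesis at $p$ excluding this case is therefore indispensable, and it does not follow from Assumption 1. The paper's Theorem~\ref{maintheorem} assumes $H^0(I_{\p_n},A^-_\wp)=0$. This forces $(A^-_\wp)^{I_{\p_\infty}}=0$, so $\ker\bigl(H^1(I_p,A^-_\wp)\to H^1(I_{\p_\infty},A^-_\wp)\bigr)=H^1(I_p/I_{\p_\infty},(A^-_\wp)^{I_{\p_\infty}})=0$. The same hypothesis is what Lemma~\ref{0.6} uses to prove $H^1_{\mathrm g}(G_{\p_n},A)=H^1_{\mathrm{GV}}(G_{\p_n},A)$. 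That is the other possible contribution to $\ker r_p$, and your analysis at $p$ skips it, even though your top row uses the $\mathrm g$-condition and your bottom row the GV-condition.

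Once this hypothesis is added, the rest of your plan goes through. Your route to surjectivity is a legitimate alternative to the paper's direct application of Greenberg's Proposition 3.2.1 to $\lambda$ over $\Q$:
\begin{itemize}
\item first prove surjectivity over $\Q_\infty$;
\item use $S_\Gamma=0$ to make the $\Gamma$-invariant localization onto, which gives $\#\mathrm{coker}(r)=\#\ker\phi/\#\mathrm{coker}(\lambda)$;
\item use Poitou--Tate with $H^0(\Q,A^*)=0$ to show $\mathrm{coker}(\lambda)=0$.
\end{itemize}
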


In our case, all groups are $p$-primary, and therefore there is no need to separate the $p$-part of the Tamagawa numbers of $A$. Although the methods of Longo–Vigni apply to a broad class of p-adic Galois representations, they cannot be applied directly in the setting of Artin representations considered here. Consequently, some of the arguments need to be adapted to our setting, and we develop these modifications in this paper.


\section{Definition of Selmer Groups}
Let $(\rho, V, V_\wp^+)$ be an irreducible Artin representation. Fix a $\Delta$-invariant $\OO$-lattice $T \subset V$. Since $V_\wp^+$ is $\Delta_\wp$-stable, it induces a filtration $0 \subset T_\wp^+ \subset T,$ where $T_\wp^+ := T \cap V_\wp^+$. Define $V_\wp^- := V/V_\wp^+,\,T_\wp^- := T/T_\wp^+.$ Let $A:=V/T$, and let $A_\wp^+$ denote the image of $V_\wp^+$ in $A$. We further define $A_\wp^- := A/A_\wp^+.$ Then $A$, $A_\wp^+$, and $A_\wp^-$ are discrete $\OO$-modules and are isomorphic, as groups, to $(\FF/\OO)^d$, $(\FF/\OO)^{d^+}$, and $(\FF/\OO)^{d^-}$, respectively. Following Greenberg-Vatsal \cite[p. 285]{GV}, we define $T^*:=\Hom(A,\mu_{p^\infty}),$ which is a free $\OO$-module of rank $d$. Let $V^*:=T^*\otimes_{\OO}\FF,
$ and $A^*:=V^*/T^*.$ Note that the action of $G_{\Q}$ on $V^*$ is given by the contragredient representation twisted by the cyclotomic character. In particular, $V^*$ is  not an Artin representation.

Let $\Sigma$ be a finite set of primes of $\Q$ that contain $p$, the Archimedean place $\infty$, and all primes that ramify in the extension $K/\Q$. Let $\Q_\Sigma$ denote the maximal extension of $\Q$ that is unramified outside $\Sigma$, so in particular $K \subset \Q_\Sigma$. Let $\Q_\infty/\Q$ be the cyclotomic $\Z_p$-extension, and write $\Gamma := \Gal(\Q_\infty/\Q)$. For each integer $n \ge 0$, denote by $\Q_n$ the unique subextension of $\Q_\infty$ with $[\Q_n : \Q] = p^n$. The prime $p$ is totally ramified in both $\Q_n$ and $\Q_\infty$. We denote by $\p_n$ (respectively, $\p_\infty$) the unique prime of $\Q_n$ (respectively, $\Q_\infty$) lying above $p$. If $\omega$ is a prime of $\Q_n$, we write $\omega \mid \Sigma$ to indicate that $\omega$ lies above a prime in $\Sigma$. For each $n \ge 0$, let $v_n$ be a prime of $\Q_n$ lying above a prime in $\Sigma$, and let $v_\infty$ be a prime of $\Q_\infty$ lying above $v_n$. Choose a prime $\overline{v}$ of $\overline{\Q}$ that lies above $v_\infty$ (and therefore above $v_n$). We denote by $G_{v_n}$ (respectively, $G_{v_\infty}$) the decomposition subgroup of $\Gal(\overline{\Q}/\Q_n)$ (respectively, $\Gal(\overline{\Q}/\Q_\infty)$) corresponding to $\overline{v}$. The associated inertia subgroups are denoted by $I_{v_n}$ and $I_{v_\infty}$, respectively.

Following \cite[Introduction]{GV}, we first introduce the local conditions that define the Selmer group.
\begin{equation*}
H^1_{\text{GV}}(G_\omega,A) :=
\begin{cases}
\ker\big(H^1(G_\omega,A) \to H^1(I_\omega,A)\big), & \text{if } \omega \nmid p, \\[6pt]
\ker\big(H^1(G_{\p_n}, A) \to H^1(I_{\p_n}, A_\wp^-)\big), & \text{if } \omega = \p_n.
\end{cases}
\end{equation*}
\begin{definition}\label{selgv}
The Selmer group Sel$^{\text{GV}}_{\Q_n}(A,A_\wp^+)$ is defined by
\begin{equation*}
\mathrm{Sel}^{\text{GV}}_{\Q_n}(A,A_\wp^+) := \ker\Big(H^1(\Q_\Sigma/\Q_n, A) \longrightarrow 
\bigoplus_{\omega \mid \Sigma} 
\frac{H^1(G_\omega, A)}{H^1_{\text{GV}}(G_\omega,A)} \Big).
\end{equation*}
\end{definition}

Equivalently, the Selmer group Sel$^{\text{GV}}_{\Q_n}(A,A_\wp^+)$ can be described as
\begin{equation*}
\mathrm{Sel}^{\text{GV}}_{\Q_n}(A,A_\wp^+) = \ker\Big(H^1(\Q_\Sigma/\Q_n, A) \longrightarrow 
\bigoplus_{\substack{\omega \mid \Sigma,\, \omega \nmid p\infty}} H^1(I_\omega , A)
\;\oplus\;
H^1(I_{\p_n}, A_\wp^-)\Big).    
\end{equation*}
Following Definition~\ref{selgv}, we now define the Selmer group associated to the representation $(\rho, V, V_{\wp}^+)$ over $\Q_n$ as a subgroup of $H^1(\Q_\Sigma/\Q_n, V)$ using a Selmer structure (see \cite[Definition 2.2]{Bellaiche_BK}).  We first specify the local conditions. For each place $\omega$, set
\begin{equation*}
\mathcal{L}_\omega := H^1_\text{g}(G_\omega,V) =
\begin{cases}
\ker\big(H^1(G_\omega,V) \to H^1(I_\omega,V)\big), & \text{if } \omega \nmid p, \\[6pt]
\ker\big(H^1(G_{\p_n}, V) \to H^1(I_{\p_n}, V_\wp^-)\big), & \text{if } \omega = \p_n.
\end{cases}
\end{equation*}
A Selmer structure in $V$ induces one in $A$ via $\text{pr}: V \to A$, giving a map of cohomology.
\begin{equation*}
\text{pr}^*: H^1(G_\omega,V) \longrightarrow H^1(G_\omega,A).    
\end{equation*}
We define $H^1_\text{g}(G_\omega,A)$ as the image of $H^1_\text{g}(G_\omega,V)$ under $\text{pr}^*$, for each place $\omega$ of $\Q_n$ lying above a prime in $\Sigma$. Using these local conditions, we now define an auxiliary Selmer group of $A$ over $\Q_n$ as follows.
\begin{definition}\label{selg}
The Selmer group Sel$^\text{g}_{\Q_n}(A,A_\wp^+)$ is defined by
\begin{equation*}
\ker\Big(H^1(\Q_\Sigma/\Q_n, A) \longrightarrow 
\bigoplus_{\omega \mid \Sigma} 
\frac{H^1(G_\omega, A)}{H^1_\text{g}(G_\omega,A)} \Big).    
\end{equation*}
\end{definition}
For $\dagger \in \{\text{g}, \text{GV}\}$, we define the Pontryagin dual of the Selmer group Sel$^\dagger_{\Q_n}(A,A_\wp^+)$ by
\begin{equation*}
X^\dagger_{\Q_n}(T,T_\wp^+) := \Hom_{\mathrm{cont}}\left( \mathrm{Sel}^\dagger_{\Q_n}(A,A_\wp^+), \Q_p/\Z_p\right).    
\end{equation*}
We now extend these definitions to the cyclotomic $\Z_p$-extension  $\Q_\infty/\Q$. The Selmer group over $\Q_\infty$ is defined by
\begin{equation*}
    \mathrm{Sel}^\dagger_{\Q_\infty}(A,A_\wp^+) := \varinjlim_n \mathrm{Sel}^\dagger_{\Q_n}(A,A_\wp^+),
\end{equation*}
and its Pontryagin dual by
\begin{equation*}
   X^\dagger_{\Q_\infty}(T,T_\wp^+) := \varprojlim_n X^\dagger_{\Q_n}(T,T_\wp^+), 
\end{equation*}
where the limits are taken with respect to the natural restriction and corestriction maps. The natural action of $\Gamma = \Gal(\Q_\infty/\Q)$ on $X^\dagger_{\Q_\infty}(T,T_\wp^+)$ induces a $\Lambda_\OO$-module structure, and $X^\dagger_{\Q_\infty}(T,T_\wp^+)$ is finitely generated over
$\Lambda_{\mathcal O}$ (\cite[p.~226, Corollary]{BalisterHowson1997}).

To compare the two Selmer groups defined above, we look at the local conditions in each prime $\omega \in \Sigma$. For primes $\omega \nmid p$, the difference between local conditions $H^1_{\text{GV}}(G_\omega, A)$ and $H^1_\text{g}(G_\omega, A)$ is captured by the Tamagawa numbers, which we now define.
Consider the following commutative diagram
\begin{equation*}
 \begin{tikzcd}
H^1(G_\omega, V) \arrow[r, "\mathrm{res}_V"] \arrow[d, "\text{pr}^*"'] 
& H^1(I_\omega, V) \arrow[d, "\text{pr}^*"] \\
H^1(G_\omega, A) \arrow[r, "\mathrm{res}_A"'] 
& H^1(I_\omega, A)
\end{tikzcd}   
\end{equation*}

Let $c \in H^1_{\text{g}}(G_\omega, A)$. By definition, there exists $\tilde{c} \in H^1(G_\omega, V)$ such that $\mathrm{pr}^*(\tilde{c}) = c$ and $\mathrm{res}_V(\tilde{c}) = 0$. By commutativity of the diagram, we have $\mathrm{res}_A(c) = \mathrm{pr}^*(\mathrm{res}_V(\tilde{c})) = 0$. Hence $c \in H^1_{\text{GV}}(G_\omega, A)$, and therefore $H^1_{\text{g}}(G_\omega, A) \subset H^1_{\text{GV}}(G_\omega, A)$. 

\begin{lemma} \label{0.5}
\normalfont The subgroup $H^1_{\mathrm{g}}(G_\omega, A)$ has finite index in $H^1_{\mathrm{GV}}(G_\omega, A)$.
\end{lemma}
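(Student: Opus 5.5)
Since $H^1_{\mathrm g}(G_\omega,A)\subset H^1_{\mathrm{GV}}(G_\omega,A)$ was just shown, it suffices to show that $H^1_{\mathrm{GV}}(G_\omega,A)$ is finite modulo $H^1_{\mathrm g}(G_\omega,A)$. The plan is to compare both groups with the maximal divisible subgroup of $H^1_{\mathrm{GV}}(G_\omega,A)$. The local cohomology groups $H^1(G_\omega,A)$ are cofinitely generated $\OO$-modules, and a cofinitely generated $\OO$-module has finite quotient by its maximal divisible subgroup. So it is enough to prove
\[
H^1_{\mathrm{GV}}(G_\omega,A)_{\mathrm{div}} \subset H^1_{\mathrm g}(G_\omega,A).
\]

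\textbf{Step 1: the image of $V$-cohomology is the divisible part of $H^1(G_\omega,A)$.} Take the long exact sequence of $0\to T\to V\to A\to 0$:
\[
H^1(G_\omega,V)\xrightarrow{\mathrm{pr}^*} H^1(G_\omega,A)\to H^2(G_\omega,T).
\]
Since $H^2(G_\omega,T)$ is a finitely generated $\OO$-module, its torsion is finite. The image of $\mathrm{pr}^*$ is divisible, being the image of an $\FF$-vector space. Hence $\mathrm{im}(\mathrm{pr}^*)=H^1(G_\omega,A)_{\mathrm{div}}$ up to finite index; in fact equality holds, because the divisible part maps to a divisible subgroup of the finitely generated $\OO$-module $H^2(G_\omega,T)$, which must be zero.

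\textbf{Step 2: divisible elements of $H^1_{\mathrm{GV}}$ lift to $H^1_{\mathrm g}$.} Let $c\in H^1_{\mathrm{GV}}(G_\omega,A)_{\mathrm{div}}$. By Step 1, $c=\mathrm{pr}^*(\tilde c)$ for some $\tilde c\in H^1(G_\omega,V)$. We need to modify $\tilde c$ by an element of $\ker(\mathrm{pr}^*)$ so that it lands in $\ker(\mathrm{res}_V)$. We know $\mathrm{res}_V(\tilde c)$ maps to $0$ in $H^1(I_\omega,A)$, so it comes from the image of $H^1(I_\omega,T)$ in $H^1(I_\omega,V)$.

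Here I would use that the representation is Artin and $\omega\nmid p$. The inertia group $I_\omega$ acts on $V$ through a finite group, which is of order prime to $p$ since $p\nmid[K:\Q]$; the pro-$p$ part acts trivially. Consequently
\[
H^1(I_\omega,V)=\Hom(I_\omega^{\text{tame},p},V^{I_\omega})^{\text{Frob}}\quad\text{(up to taking invariants)},
\]
and similarly for $T$, with $H^1(I_\omega,T)\otimes\FF = H^1(I_\omega,V)$. Inflation–restriction gives
\[
H^1(G_\omega/I_\omega,V^{I_\omega})\hookrightarrow H^1(G_\omega,V)\to H^1(I_\omega,V)^{G_\omega}\to H^2(G_\omega/I_\omega,\cdot)=0 .
\]
Now $\mathrm{res}_V(\tilde c)$ is a $G_\omega$-invariant class lying in the image of $H^1(I_\omega,T)$. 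By the finite-index/torsion argument, some multiple $p^k\,\mathrm{res}_V(\tilde c)$ lifts to a class in $\mathrm{im}\,H^1(G_\omega,T)$. Because $c$ is divisible, write $c=p^k c'$ with $c'$ again in the divisible part of $H^1_{\mathrm{GV}}$; this requires checking that the divisible part of $H^1_{\mathrm{GV}}$ is divisible within $H^1_{\mathrm{GV}}$, which is true by definition. Applying the argument to $c'$ and subtracting the lift coming from $T$ then shows $c\in H^1_{\mathrm g}(G_\omega,A)$. In other words, the quotient $H^1_{\mathrm{GV}}/H^1_{\mathrm g}$ injects into the cokernel of $H^1(G_\omega,T)\to H^1(I_\omega,T)^{G_\omega}$, modulo torsion considerations, which is finite.

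The main obstacle is this Step 2 bookkeeping. A clean route, which I expect to work, is the following: the quotient $H^1_{\mathrm{GV}}(G_\omega,A)/H^1_{\mathrm g}(G_\omega,A)$ is a quotient of $H^1_{\mathrm{GV}}/H^1_{\mathrm{GV},\mathrm{div}}$, provided Step 2 gives $H^1_{\mathrm{GV},\mathrm{div}}\subset H^1_{\mathrm g}$. Alternatively, one can identify the quotient with a subquotient of $(H^1(I_\omega,T)^{G_\omega})_{\mathrm{tors}}$, a finite group bounded by the Tamagawa-type quantity $\#\big(A^{I_\omega}/(A^{I_\omega})_{\mathrm{div}}\big)^{G_\omega}$. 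Either description yields finiteness. For $\omega=\p_n$ the same argument works with $I_{\p_n}$ and $A_\wp^-$ in place of $I_\omega$ and $A$; the only input needed there is that $H^1(I_{\p_n},T_\wp^-)$ is finitely generated up to the usual cofinite arguments.
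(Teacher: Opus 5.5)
Your argument is correct in outline for $\omega\nmid p$, which is the case this lemma addresses (the paper treats $\p_n$ separately in Lemma~\ref{0.6}). It follows a different route from the paper, which simply cites Rubin's Lemma 1.3.5(ii).

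The argument behind that citation is explicit. By inflation--restriction, $H^1_{\mathrm{GV}}(G_\omega,A)=H^1(G_\omega/I_\omega,A^{I_\omega})=A^{I_\omega}/(\mathrm{Fr}-1)A^{I_\omega}$. Likewise, $H^1_{\mathrm g}(G_\omega,A)$ is the image of $V^{I_\omega}/(\mathrm{Fr}-1)V^{I_\omega}$, i.e.\ of $(A^{I_\omega})_{\mathrm{div}}$. So the index is the order of a quotient of the finite group $A^{I_\omega}/(A^{I_\omega})_{\mathrm{div}}$. That route gives a formula for $c_\omega(A)$. Here it even shows $c_\omega(A)=1$: the image of $I_\omega$ in $\Delta$ has order prime to $p$, so $V^{I_\omega}\to A^{I_\omega}$ is onto and $A^{I_\omega}$ is divisible. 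Your route reduces to $(H^1_{\mathrm{GV}})_{\mathrm{div}}\subset H^1_{\mathrm g}$ and then lifts classes as in Lemma~\ref{0.6}. It gives finiteness without a formula, but it fits the style of the rest of the paper. Your reduction and Step 1 are fine.

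Step 2 needs to be stated more carefully. Suppose $\mathrm{res}_V(\tilde c)=\iota(s)$ with $s\in H^1(I_\omega,T)$. Only $\iota(s)$ is known to be Frobenius-invariant, so $(\mathrm{Fr}-1)s$ lies in $\ker\iota=\delta(A^{I_\omega})\cong A^{I_\omega}/\mathrm{im}(V^{I_\omega})$. This finite group is the actual obstruction. Your $p^k$ must be its exponent, fixed \emph{before} you write $c=p^kc'$; otherwise passing to $c'$ is circular.

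Once $p^ks$ is invariant, it lifts to $H^1(G_\omega,T)$ with no loss. The map $H^1(G_\omega,T)\to H^1(I_\omega,T)^{G_\omega/I_\omega}$ is surjective, because $G_\omega/I_\omega\cong\hat{\Z}$ has cohomological dimension one. So the cokernel you point to in your summary is zero, and it is not where the index comes from. In the present setting $\ker\iota=0$ and $k=0$.

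Your closing remark about $\p_n$ misidentifies the input. $H^1(I_{\p_n},T_\wp^-)$ is not finitely generated over $\OO$. What matters is, first, that its torsion is finite. Second, there is an extra step with no analogue at $\omega\nmid p$: lifting from $H^1(G_{\p_n},T_\wp^-)$ to $H^1(G_{\p_n},T)$, whose obstruction lies in the finite group $H^2(G_{\p_n},T_\wp^+)$.
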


\begin{proof}
The statement follows from \cite[Lemma 1.3.5 (ii)]{Rubin2000}.
\end{proof}

\begin{lemma} 
\normalfont If $\omega \notin \Sigma$, then $H^1_{\mathrm{g}}(G_\omega, A)$ is same as $H^1_{\mathrm{GV}}(G_\omega, A)$.
\end{lemma}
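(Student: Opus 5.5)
The lemma reads "$\omega \notin \Sigma$", but the local conditions are only defined for $\omega \mid \Sigma$. I will interpret the statement for a place $\omega \nmid p$ of $\Q_n$ at which $A$ is unramified, i.e.\ $I_\omega$ acts trivially on $V$. This holds automatically when $\omega$ lies over a prime outside $\Sigma$, since $\rho$ factors through $\Delta$ and $K \subset \Q_\Sigma$. The inclusion $H^1_{\mathrm g}(G_\omega,A) \subset H^1_{\mathrm{GV}}(G_\omega,A)$ is already established above. So the plan is to prove the reverse inclusion by identifying both groups with the unramified cohomology of the residual Galois group.

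\textbf{Step 1: describe $H^1_{\mathrm{GV}}$.} Since $I_\omega$ acts trivially on $A$, inflation--restriction gives
\[
H^1_{\mathrm{GV}}(G_\omega,A)=\ker\bigl(H^1(G_\omega,A)\to H^1(I_\omega,A)\bigr)=H^1(G_\omega/I_\omega, A).
\]
Here $G_\omega/I_\omega\cong\widehat{\Z}$ is topologically generated by a Frobenius $\mathrm{Fr}_\omega$. For the discrete torsion module $A$ this gives $H^1(G_\omega/I_\omega,A)\cong A/(\mathrm{Fr}_\omega-1)A$. Similarly, $H^1_{\mathrm g}(G_\omega,V)=H^1(G_\omega/I_\omega,V)\cong V/(\mathrm{Fr}_\omega-1)V$.

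\textbf{Step 2: surjectivity.} The map $\mathrm{pr}^*\colon H^1(G_\omega/I_\omega,V)\to H^1(G_\omega/I_\omega,A)$ is, under these identifications, the map $V/(\mathrm{Fr}_\omega-1)V\to A/(\mathrm{Fr}_\omega-1)A$ induced by $\mathrm{pr}\colon V\to A$. This map is surjective because $V\to A$ is surjective. Its image is therefore all of $H^1_{\mathrm{GV}}(G_\omega,A)$.

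\textbf{Step 3: compatibility with inflation.} It remains to check that the image of $H^1_{\mathrm g}(G_\omega,V)$ under $\mathrm{pr}^*$ in $H^1(G_\omega,A)$ is the inflation of this image. This holds because inflation commutes with the coefficient map $\mathrm{pr}$. Hence $H^1_{\mathrm g}(G_\omega,A)=H^1_{\mathrm{GV}}(G_\omega,A)$.

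\textbf{Main obstacle.} The only delicate point is the identification $H^1(\widehat{\Z},M)\cong M/(\mathrm{Fr}_\omega-1)M$ for $M=V$, which is not discrete torsion. It holds for continuous cohomology of a finite-dimensional $\FF$-vector space with continuous action. Alternatively, one can avoid it: write $H^1(\widehat{\Z},V)=\varprojlim_k H^1(\widehat{\Z},T/p^k)\otimes\Q_p$ and use the exact sequence $H^1(G_\omega/I_\omega,V)\to H^1(G_\omega/I_\omega,A)\to H^2(G_\omega/I_\omega,T)$. The last group vanishes because $\widehat{\Z}$ has cohomological dimension one on torsion-free pro-$p$ coefficients (equivalently, $H^2(\widehat{\Z},T)=\varprojlim_k H^2(\widehat{\Z},T/p^k)=0$). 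This vanishing argument is the version I would write out, since it gives surjectivity directly.
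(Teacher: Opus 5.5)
Your proof is correct and is essentially the paper's argument. The paper simply cites Rubin's Lemma~1.3.5(iv), whose hypothesis is that the lattice is unramified at the place; this matches your reading of ``$\omega\notin\Sigma$'', which is justified because $\Sigma$ contains the primes ramified in $K$. The content of that lemma is the computation you give: both local conditions are inflations of the unramified cohomology $M/(\mathrm{Fr}_\omega-1)M$, naturally in $M$, and $V/(\mathrm{Fr}_\omega-1)V\to A/(\mathrm{Fr}_\omega-1)A$ is surjective. Either of your two ways of handling the continuous cohomology of $V$ works. For the second, the interchange $H^2(G_\omega/I_\omega,T)=\varprojlim_k H^2(G_\omega/I_\omega,T/p^kT)$ needs the groups $H^1(G_\omega/I_\omega,T/p^kT)$ to be finite, which they are here.
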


\begin{proof}
This follows from \cite[Lemma 1.3.5 (iv)]{Rubin2000}.
\end{proof}

Next, we consider the following commutative diagram at the prime $\omega = \p_n$:
\begin{equation*}
 \begin{tikzcd}
H^1(G_{\p_n}, V) \arrow[r, "\mathrm{res}_V"] \arrow[d, "\text{pr}_1^*"'] 
& H^1(I_{\p_n},  V_\wp^-) \arrow[d, "\text{pr}_2^*"] \\
H^1(G_{\p_n}, A) \arrow[r, "\mathrm{res}_A"'] 
& H^1(I_{\p_n},  A_\wp^-)
\end{tikzcd}   
\end{equation*}

Using a similar argument, we obtain $H^1_{\text{g}}(G_{\p_n}, A) \subset H^1_{\text{GV}}(G_{\p_n}, A)$. In fact, one can show that both are equal under certain mild hypothesis.

\begin{lemma} \label{0.6}
\normalfont Assume that $H^0(I_{\mathfrak p_n},A_\wp^-)=0$. Then, if $\omega=\mathfrak p_n$, we have $ H^1_{\mathrm g}(G_{\mathfrak p_n},A) = H^1_{\mathrm{GV}}(G_{\mathfrak p_n},A)$.
\end{lemma}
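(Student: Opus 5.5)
We already know $H^1_{\mathrm g}(G_{\mathfrak p_n},A)\subset H^1_{\mathrm{GV}}(G_{\mathfrak p_n},A)$, so the plan is to prove the reverse inclusion. Take $c\in H^1(G_{\mathfrak p_n},A)$ with $\mathrm{res}_A(c)=0$ in $H^1(I_{\mathfrak p_n},A_\wp^-)$. We want a lift $\tilde c\in H^1(G_{\mathfrak p_n},V)$ with $\mathrm{pr}_1^*(\tilde c)=c$ and $\mathrm{res}_V(\tilde c)=0$. Two things are needed: a lift exists at all, and it can be chosen to restrict trivially.

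\emph{Step 1: existence of a lift.} From $0\to T\to V\to A\to 0$ we get the exact sequence
\[
H^1(G_{\mathfrak p_n},V)\to H^1(G_{\mathfrak p_n},A)\to H^2(G_{\mathfrak p_n},T).
\]
Since $\rho$ has finite image and $p\nmid|\Delta|$, the cohomology is controlled by a pro-$p$ situation; alternatively, use local duality, $H^2(G_{\mathfrak p_n},T)\cong H^0(G_{\mathfrak p_n},A^*)^\vee$. We must deal with the boundary $\delta(c)\in H^2(G_{\mathfrak p_n},T)_{\mathrm{tors}}$. I would first try to show that $\delta(c)$ is torsion and killed after replacing $c$ appropriately. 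More directly, the image of $H^1(G,V)$ is $H^1(G,A)_{\mathrm{div}}$, so the real task is to show that $c$ lies in the divisible part. This is where the hypothesis enters.

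\emph{Step 2: use $H^0(I_{\mathfrak p_n},A_\wp^-)=0$.} Restrict to the inertia sequence for $0\to T_\wp^-\to V_\wp^-\to A_\wp^-\to0$. The vanishing of $H^0(I,A_\wp^-)$ gives:
\begin{itemize}
\item $H^1(I,T_\wp^-)\to H^1(I,V_\wp^-)$ is injective;
\item $H^1(I,T_\wp^-)$ is torsion-free, since its torsion is a quotient of $H^0(I,A_\wp^-)=0$;
\item $H^0(I,V_\wp^-)=0$.
\end{itemize}
Now take any lift $\tilde c$ of $c$ (or of $p^kc$, then divide). Its image $x=\mathrm{res}_V(\tilde c)\in H^1(I,V_\wp^-)$ maps to $0$ in $H^1(I,A_\wp^-)$. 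So $x$ comes from $H^1(I,T_\wp^-)$, and in fact from the $G_{\mathfrak p_n}/I$-invariants.

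To kill $x$ we modify $\tilde c$ by elements of $H^1(G,T)$ or $H^1(G,V)$. I would compare via inflation–restriction: $H^1(G,V_\wp^-)\to H^1(I,V_\wp^-)^{G/I}$ is surjective up to $H^2(G/I,\cdot)$, and $H^2(\hat{\mathbb Z},\cdot)=0$. So $x$ lifts to $H^1(G,V_\wp^-)$, and indeed to the image of $H^1(G,T_\wp^-)$ after scaling. Then lift back to $H^1(G,V)$ using $H^2(G,V_\wp^+)$. The hard point is ensuring this correction lies in the image of $H^1(G,T)$, so that it does not change $c$. To get that, I would use the $T$-level inflation–restriction sequence and the torsion-freeness of $H^1(I,T_\wp^-)$ to show that $x$ actually lies in the image of $H^1(G,T)\to H^1(I,T_\wp^-)$. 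Subtracting that image from $\tilde c$ gives the desired lift.

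\emph{Main obstacle.} The main difficulty is the lifting bookkeeping through $H^2(G_{\mathfrak p_n},T_\wp^+)$ and $H^2(G_{\mathfrak p_n},T)$ when passing from $T_\wp^-$ back to $T$. My first attempt would be to work throughout with $V$-coefficients, where all obstructions are finite, so they vanish after tensoring with $\mathbb Q_p$. The remaining finite ambiguity would then be absorbed using the injectivity and torsion-freeness from Step 2, which is the only place $H^0(I_{\mathfrak p_n},A_\wp^-)=0$ is used. If this fails, the fallback is to conclude only finiteness of the index, as in Lemma~\ref{0.5}, and then show that the index is the order of $H^0(I,A_\wp^-)^{G}$-type terms, which vanish under the hypothesis.
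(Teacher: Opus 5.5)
\paragraph{Gap: the lift in Step 1.}
Your Step 2 is, in substance, the paper's argument. You write $\mathrm{res}_V(\tilde c)=\iota_1^*(\tilde s)$ with $\tilde s\in H^1(I_{\mathfrak p_n},T^-_\wp)$. Injectivity of $\iota_1^*$, the only use of $H^0(I_{\mathfrak p_n},A^-_\wp)=0$, makes $\tilde s$ invariant under $G_{\mathfrak p_n}/I_{\mathfrak p_n}$. You then lift $\tilde s$ by inflation--restriction and further to some $t\in H^1(G_{\mathfrak p_n},T)$, and replace $\tilde c$ by $\tilde c-\iota_2^*(t)$.

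The genuine gap is Step 1: you never produce a lift $\tilde c\in H^1(G_{\mathfrak p_n},V)$ of $c$ itself. Lifting $p^kc$ and dividing by $p^k$ recovers $c$ only modulo $H^1(G_{\mathfrak p_n},A)[p^k]$. Passing to $V$-coefficients so that obstructions vanish ``up to finite error'' does not help, since the lemma is exactly a statement about that finite error. The inertia hypothesis cannot supply this step either. The obstruction $\delta(c)$ lies in $H^2(G_{\mathfrak p_n},T)\cong H^0(G_{\mathfrak p_n},A^*)^\vee$, which does not see the $I_{\mathfrak p_n}$-invariants of $A^-_\wp$. The paper handles this step, and your worry about lifting from $T^-_\wp$ to $T$, by asserting $H^2(G_{\mathfrak p_n},T)=0$ and $H^2(G_{\mathfrak p_n},T^+_\wp)=0$. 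Its justification is local Tate duality plus the infinite order of the cyclotomic character.

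\paragraph{Why the paper's vanishing claim does not close it.}
Your hesitation is well founded. With the discrete dual $A^*=\Hom(T,\mu_{p^\infty})$ that you correctly wrote down, the infinite-order argument only shows that $H^0(G_{\mathfrak p_n},A^*)$ is finite, not zero.

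Concretely, since $p\nmid|\Delta_\wp|$ there is a $G_{\mathfrak p_n}$-stable splitting $T=T^+_\wp\oplus T^-_\wp$. The hypothesis $H^0(I_{\mathfrak p_n},A^-_\wp)=0$ then gives $H^1_{\mathrm{GV}}(G_{\mathfrak p_n},A)=H^1(G_{\mathfrak p_n},A^+_\wp)$. Meanwhile $H^1_{\mathrm g}(G_{\mathfrak p_n},A)$ is the image of $H^1(G_{\mathfrak p_n},V^+_\wp)$, i.e.\ its maximal divisible subgroup. Hence the index is $\#H^2(G_{\mathfrak p_n},T^+_\wp)=\#H^0(G_{\mathfrak p_n},\Hom(T^+_\wp,\mu_{p^\infty}))$.

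This index is $1$ if $\varepsilon_\wp\neq\omega$, where $\omega$ is the Teichm\"uller character. If $\varepsilon_\wp=\omega$ it equals $\#(\OO/p^{n+1}\OO)$. Neither Assumption~1 nor the inertia hypothesis excludes this case: take $\rho=\omega\psi$ with $\psi$ an odd quadratic character unramified at $p$ and $\psi(p)=1$.

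So the missing ingredient is an extra hypothesis such as $\varepsilon_\wp\neq\omega$, equivalently $H^2(G_{\mathfrak p_n},T^+_\wp)=0$. The paper's proof tacitly needs this too; its claim $H^2(G_{\mathfrak p_n},T)=0$ even requires that $\omega$ not occur in $\rho|_{\Delta_\wp}$ at all. Your fallback of expressing the index through $H^0(I_{\mathfrak p_n},A^-_\wp)$-type terms points at the wrong group.
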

\begin{proof}
 We already know $ H^1_{\text{g}}(G_{\mathfrak{p}_n},A)\subseteq H^1_{\text{GV}}(G_{\mathfrak{p}_n},A)$. Now take any $ c \in H^1_{\text{GV}}(G_{\mathfrak{p}_n},A)$, so that $
\operatorname{res}_A(c)=0$ in  $H^1(I_{\mathfrak{p}_n},A^-_\wp)$. We want to show that $c \in H^1_{\text{g}}(G_{\mathfrak{p}_n},A)$.

Consider the short exact sequence $0 \to T \to V \to A \to 0$, which induces the exact sequence
\begin{equation*}
 H^1(G_{\mathfrak p_n},V)
\xrightarrow{\mathrm{pr}_1^*}
H^1(G_{\mathfrak p_n},A)
\longrightarrow
H^2(G_{\mathfrak p_n},T).
\end{equation*}
By local Tate duality\cite[Theorem~7.2.6]{NSW2008}, we have $H^2(G_{\mathfrak p_n},T) \cong H^0(G_{\mathfrak p_n},T^*(1))^\vee$. Now $T$ comes from an Artin representation, hence the action of
$G_{\mathfrak p_n}$ on $T$ factors through a finite quotient.
After twisting by the cyclotomic character, the action on
$(T)^*(1)$ is given by a finite-order character times the cyclotomic character.
However, the cyclotomic character has an infinite order. Hence $(T)^*(1)$ cannot contain a nonzero
$G_{\mathfrak p_n}$-invariant element, and hence $(T^*(1))^{G_{\p_n}} = 0$. It follows that $H^2(G_{\p_n}, T) = 0$. Thus, the map $\mathrm{pr}_1^*$ is surjective and therefore, there exists $\tilde{c}\in H^1(G_{\mathfrak{p}_n},V)$ such that $\mathrm{pr}_1^*(\tilde{c})=c$. By the commutativity of the above diagram, we get 
\begin{equation*}
\mathrm{pr}_2^*\bigl(\mathrm{res}_V(\tilde{c})\bigr)
=
\mathrm{res}_A\bigl(\mathrm{pr}_1^*(\tilde{c})\bigr)
=
\mathrm{res}_A(c)
=
0
\in
H^1(I_{\mathfrak{p}_n},A^-_\wp).
\end{equation*}
So, $\mathrm{res}_V(\tilde{c})
\in
\ker\!\left(
\mathrm{pr}_2^* :
H^1(I_{\mathfrak{p}_n},V^-_\wp)
\to
H^1(I_{\mathfrak{p}_n},A^-_\wp)
\right)$.

The short exact sequence  $ 0 \to T^-_\wp \to V^-_\wp \to A^-_\wp \to 0$, induces the long exact sequence
\begin{equation*}
H^0(I_{\mathfrak{p}_n},A^-_\wp) \xrightarrow{\ \delta  \ } H^1(I_{\mathfrak{p}_n},T^-_\wp)
\xrightarrow{\ \iota_1^*\ }
H^1(I_{\mathfrak{p}_n},V^-_\wp)
\xrightarrow{\ \mathrm{pr}_2^*\ }
H^1(I_{\mathfrak{p}_n},A^-_\wp).  
\end{equation*}
Therefore, $ \ker\!\left(\mathrm{pr}_2^* :H^1(I_{\mathfrak{p}_n},V^-_\wp) \to H^1(I_{\mathfrak{p}_n},A^-_\wp)\right) = \iota_*\!\left(H^1(I_{\mathfrak{p}_n},T^-_\wp)\right).$ Hence, $\mathrm{res}_V(\tilde{c}) = \iota_1(\tilde{s})$ for some $\tilde{s} \in H^1(I_{\mathfrak{p}_n},T^-_\wp).$ We can show that  $\tilde{s} \in H^1(I_{\mathfrak p_n},T^-_\wp)^{G_{\mathfrak p_n}/I_{\mathfrak p_n}}$. Since the natural map $T^-_\wp \to V^-_\wp$ is $G_{\mathfrak{p}_n}$ equivariant, the induced map $\iota_1^*$  commutes with the $G_{\mathfrak p_n}/I_{\mathfrak p_n}$-action. Therefore 
\begin{equation*}
 \iota_1^*(\sigma \cdot \tilde{s}) = \sigma \cdot \iota_1^*(\tilde{s}) = \sigma \cdot  \mathrm{res}_V(\tilde{c}) = \mathrm{res}_V(\tilde{c}) = \iota_1^*(\tilde{s}),
\end{equation*}
we obtain $\sigma \cdot \tilde{s} - \tilde{s} \in \text{Ker}(\iota_1^*)$. Since $H^0(I_{\mathfrak{p}_n},A^-_\wp)=0$, it follows that $\iota_1^*$ is injective. Therefore
$\sigma \cdot \tilde{s} = \tilde{s}$ for all $\sigma \in G_{\mathfrak p_n}/I_{\mathfrak p_n}$.

Since $\tilde{s} \in H^1(I_{\mathfrak p_n},T^-_\wp)^{G_{\mathfrak p_n}/I_{\mathfrak p_n}}$, we first lift $\tilde{s}$ to a class in $\tilde{s}$ in $H^1(G_{\mathfrak p_n},T^-_\wp)$, and then we lift it further to a class in $H^1(G_{\mathfrak p_n},T)$. To obtain the second lift, consider the short exact sequence $0 \to T^+_\wp \to T \to T^-_\wp \to 0$. The associated long exact sequence gives 
\begin{equation*}
H^1(G_{\mathfrak p_n},T)
\xrightarrow{\ \mathrm{pr}_3^*\ }
H^1(G_{\mathfrak p_n},T^-_\wp)
\longrightarrow
H^2(G_{\mathfrak p_n},T^+_\wp).    
\end{equation*}
Once again, by local Tate duality, $H^2(G_{\mathfrak p_n},T^+_\wp) \cong H^0(G_{\mathfrak p_n},(T^+_\wp)^*(1))^\vee$. Using the same argument as before, we can show that $(T^+_\wp)^*(1)$ has no nonzero element invariant under the  $G_{\mathfrak p_n}$-action. So $H^0(G_{\mathfrak p_n},(T^+_\wp)^*(1))=0$ and therefore, $H^2(G_{\mathfrak p_n},T^+_\wp)=0$ and the map $\mathrm{pr}_3^*$  is surjective. 
\vspace{0.1cm}
Next, consider the inflation--restriction sequence 
{
\[
 H^1(G_{\mathfrak p_n},T^-_\wp)
\xrightarrow{\ \mathrm{res}_{T^-_\wp}\ }
H^1(I_{\mathfrak p_n},T^-_\wp)^{G_{\mathfrak p_n}/I_{\mathfrak p_n}}
\to H^2(G_{\mathfrak p_n}/I_{\mathfrak p_n},(T^-_\wp)^{I_{\mathfrak p_n}}).
\]
} Since the cohomological dimension $G_{\mathfrak p_n}/I_{\mathfrak p_n}$ is  $1$, we have $H^2(G_{\mathfrak p_n}/I_{\mathfrak p_n},(T^-_\wp)^{I_{\mathfrak p_n}})=0$. Hence the map $\mathrm{res_{T^-_\wp}}$ is surjective. Since $\mathrm{pr}_3^*$  is also surjective, we can find $ t \in H^1(G_{\mathfrak{ p_n}},T)$ which lifts $\tilde{s}$.

Now define $c'=\tilde{c}-\iota_2^*(t)\in H^1(G_{\mathfrak p_n},V)
$, where $\iota_2^*$ is the natural map from $H^1(G_{\mathfrak p_n},T)$ to $H^1(G_{\mathfrak p_n},V)$. By commutativity of the restriction maps\cite[Proposition 1.5.3]{NSW2008}, we have 
$\mathrm{res}_V\bigl(\iota_2^*(t)\bigr) 
= \iota_1^*\!\Bigl(\mathrm{res}_{T^-_\wp}\bigl(\mathrm{pr}_3^*(t)\bigr)\Bigr)$. Therefore $\mathrm{res}_V( c')=\mathrm{res}_V(\tilde c)-\mathrm{res}_V\bigl(\iota_2^*(t)\bigr)=\iota_1^*(\tilde s)-\iota_1^*(\tilde s)=0.$ Hence, by definition $c' \in H^1_{\mathrm g}(G_{\mathfrak p_n},V)$. Moreover, since $\mathrm{pr}_1^* \circ \iota_2^*=0$, we have $\mathrm{pr}_1^*(c')=\mathrm{pr}_1^*(\tilde c)-\mathrm{pr}_1^*(\iota_2^*(t)) =c.$ Therefore, $c \in H^1_{\mathrm{g}}(G_{\mathfrak p_n},A)$. This proves $ H^1_{\mathrm{GV}}(G_{\mathfrak{p}_n},A) \subset H^1_{\mathrm{g}}(G_{\mathfrak{p}_n},A)$.

\end{proof}

Using Lemma~\ref{0.5} and \ref{0.6}, we make the following definition.

\begin{definition}
We define the Tamagawa number of $A$ at $\omega$ by
\begin{equation*}
  c_\omega(A) := \big[ H^1_{\mathrm{GV}}(G_\omega, A) : H^1_{\mathrm{g}}(G_\omega, A) \big].  
\end{equation*}
\end{definition}

\begin{proposition}\label{prop:main}

\normalfont There is an exact sequence
\begin{equation*}
    0 \longrightarrow
\mathrm{Sel}^{\mathrm{g}}_{\Q}(A,A_\wp^+)
\longrightarrow
\mathrm{Sel}^{\mathrm{GV}}_{\Q}(A,A_\wp^+)
\xlongrightarrow{}
\bigoplus_{\substack{\omega\in\Sigma\\ \omega\nmid p}}
\frac{H^1_{\mathrm{GV}}(G_\omega,A)}
     {H^1_{\mathrm{g}}(G_\omega,A)}.
\end{equation*}
In particular, we have the following result.
\begin{equation*}
    \frac{\#\mathrm{Sel}^{\mathrm{GV}}_{\Q}(A,A_\wp^+)}
     {\#\mathrm{Sel}^{\mathrm{g}}_{\Q}(A,A_\wp^+)}
\;\Big|\;
\prod_{\substack{\omega\in\Sigma\\ \omega\nmid p}}
c_\omega(A).
\end{equation*}
\end{proposition}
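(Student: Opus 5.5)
The plan is to compare the two Selmer groups over $\Q$ directly through their local conditions. Both are subgroups of $H^1(\Q_\Sigma/\Q,A)$. We have already seen that $H^1_{\mathrm g}(G_\omega,A)\subseteq H^1_{\mathrm{GV}}(G_\omega,A)$ at every $\omega\in\Sigma$. Hence $\mathrm{Sel}^{\mathrm g}_{\Q}(A,A_\wp^+)\subseteq \mathrm{Sel}^{\mathrm{GV}}_{\Q}(A,A_\wp^+)$, and the first map is the inclusion, which is automatically injective.

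Next we define the second map. Take a class $c\in \mathrm{Sel}^{\mathrm{GV}}_{\Q}(A,A_\wp^+)$. For each $\omega\in\Sigma$ with $\omega\nmid p$, its localization $\mathrm{res}_\omega(c)$ lies in $H^1_{\mathrm{GV}}(G_\omega,A)$. We send $c$ to the tuple of images of $\mathrm{res}_\omega(c)$ in $H^1_{\mathrm{GV}}(G_\omega,A)/H^1_{\mathrm g}(G_\omega,A)$. This is a group homomorphism, since localization and the quotient maps are homomorphisms.

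For exactness in the middle, note that the composite with the inclusion is zero by definition of $\mathrm{Sel}^{\mathrm g}$. Conversely, suppose $c$ lies in the kernel. Then $\mathrm{res}_\omega(c)\in H^1_{\mathrm g}(G_\omega,A)$ for all $\omega\in\Sigma$ with $\omega\nmid p$. The remaining places need separate treatment.

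At the Archimedean place, $p$ is odd, so $H^1(G_\infty,A)=0$ and there is no condition. At $\omega=\p_0=p$, Lemma~\ref{0.6} gives $H^1_{\mathrm g}(G_p,A)=H^1_{\mathrm{GV}}(G_p,A)$. This requires the hypothesis $H^0(I_p,A_\wp^-)=0$, which I would take as part of the standing mild hypotheses, as the paper does when it invokes Lemma~\ref{0.6}. Hence $c$ satisfies every $\mathrm g$-local condition, so $c\in\mathrm{Sel}^{\mathrm g}_{\Q}(A,A_\wp^+)$.

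Finally, the sequence $0\to \mathrm{Sel}^{\mathrm g}\to\mathrm{Sel}^{\mathrm{GV}}\to \bigoplus_\omega$ identifies $\mathrm{Sel}^{\mathrm{GV}}/\mathrm{Sel}^{\mathrm g}$ with a subgroup of the direct sum. Each summand is finite by Lemma~\ref{0.5}, of order $c_\omega(A)$. The divisibility then follows from Lagrange's theorem, provided the orders are finite. If $\mathrm{Sel}^{\mathrm g}_{\Q}(A,A_\wp^+)$ is finite (by \cite{GV}, as noted in the introduction), then $\mathrm{Sel}^{\mathrm{GV}}_{\Q}(A,A_\wp^+)$ is finite too, and the ratio is an integer dividing $\prod c_\omega(A)$.

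The only real obstacle is the place above $p$. The argument depends on Lemma~\ref{0.6}, so the hypothesis $H^0(I_p,A_\wp^-)=0$ must be in force. If it is not, one would instead need an extra summand at $p$ in the sequence.
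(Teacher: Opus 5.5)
Your proposal is correct and follows the paper's argument. The containment of local conditions gives the inclusion, Lemma~\ref{0.6} removes the condition at $p$ (so the hypothesis $H^0(I_{\p_n},A_\wp^-)=0$ is indeed needed, as you note), and the divisibility follows by taking orders with the finite summands from Lemma~\ref{0.5}; your remarks on the Archimedean place and on finiteness only make explicit what the paper leaves implicit.
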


\begin{proof}
The proof of the exact sequence follows from Lemma~\ref{0.5}, which gives $H^1_{\mathrm{g}}(G_\omega,A)\subset H^1_{\mathrm{GV}}(G_\omega,A)$ for every $\omega \in \Sigma$ and from Lemma~\ref{0.6}, which states that $H^1_{\mathrm{g}}(G_{\mathfrak p_n},A)=H^1_{\mathrm{GV}}(G_{\mathfrak p_n},A)$. The divisibility statement follows by taking orders in the above exact sequence and using the definition of the Tamagawa numbers.
  
\end{proof}

\section{Control Theorem}
In this section, we discuss the behavior of the Selmer groups $\mathrm{Sel}^{\mathrm{g}}_{\Q}(A,A_\wp^+)$ in the cyclotomic tower, which relates $\mathrm{Sel}^{\mathrm{g}}_{\Q}(A,A_\wp^+)$ 
to the $\Gamma$-invariants of $\mathrm{Sel}^{\mathrm{GV}}_{\Q}(A,A_\wp^+)$. This is analogous of the control theorem of Mazur  for elliptic curves\cite{MA}, which relates $\text{Sel}_p(E/F)$ to $\text{Sel}_p(E/F_\infty)^\Gamma$  and to the control theorem of Longo–Vigni for $p$-adic Galois representations, which relates $\text{Sel}_{\text{BK}}(A/F)$  to $\text{Sel}_{\text{Gr}}(A/F_\infty)^\Gamma$. We provide an analogous statement for Artin representations. Before proving the control theorem, we state a few results that will be needed later.

\begin{lemma} \label{3.1}
\normalfont If $V^{G_{\Q}}=0$, then $A^{G_{\Q}}=0$.
\end{lemma}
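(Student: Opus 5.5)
The plan is to use that the action of $G_{\Q}$ on $V$, $T$ and $A$ factors through the finite group $\Delta=\Gal(K/\Q)$, whose order is prime to $p$ by Assumption 1. Thus all three invariant groups may be computed as $\Delta$-invariants: $V^{G_{\Q}}=V^{\Delta}$, $T^{G_{\Q}}=T^{\Delta}$ and $A^{G_{\Q}}=A^{\Delta}$. The statement then reduces to the fact that taking invariants under a group of order prime to $p$ is exact on $\OO[\Delta]$-modules.

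Concretely, since $|\Delta|$ is prime to $p$, it is a unit in $\OO$. Hence the averaging element
\begin{equation*}
e:=\frac{1}{|\Delta|}\sum_{g\in\Delta} g
\end{equation*}
lies in $\OO[\Delta]$ and is an idempotent. For any $\OO[\Delta]$-module $M$ one has $eM=M^{\Delta}$: indeed $e$ fixes invariant vectors, and $ge=e$ for all $g\in\Delta$.

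Now apply $e$ to the $\Delta$-equivariant short exact sequence
\begin{equation*}
0\to T\to V\xrightarrow{\ \mathrm{pr}\ } A\to 0 .
\end{equation*}
Since $M\mapsto eM$ picks out a direct summand functorially ($M=eM\oplus(1-e)M$), it is exact. We obtain an exact sequence $0\to T^{\Delta}\to V^{\Delta}\to A^{\Delta}\to 0$. In particular $\mathrm{pr}$ maps $V^{\Delta}$ onto $A^{\Delta}$, which can also be seen directly: for $a=\mathrm{pr}(v)\in A^{\Delta}$ we get $a=ea=\mathrm{pr}(ev)$ with $ev\in V^{\Delta}$. Hence $V^{G_{\Q}}=0$ forces $A^{G_{\Q}}=0$.

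An equivalent cohomological route goes through the long exact sequence $V^{\Delta}\to A^{\Delta}\to H^1(\Delta,T)$. Here $H^1(\Delta,T)$ is killed both by $|\Delta|$ and by the unit $|\Delta|\in\OO$, so it vanishes.

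There is no serious obstacle. The only point needing care is to justify that $G_{\Q}$ acts on $A=V/T$ through $\Delta$, which is immediate since $T$ is $\Delta$-stable and $\rho$ factors through $\Delta$, and then to invoke $p\nmid[K:\Q]$ from Assumption 1.
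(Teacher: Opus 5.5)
Your proof is correct and essentially matches the paper's: the paper uses exactly your ``cohomological route'', namely the exact sequence $H^0(\Delta,V)\to H^0(\Delta,A)\to H^1(\Delta,T)$ together with $H^1(\Delta,T)=0$ because $p\nmid|\Delta|$ (citing Neukirch--Schmidt--Wingberg, Proposition~1.6.2). Your averaging-idempotent argument is the elementary proof of that same vanishing, made explicit as the surjectivity of $\mathrm{pr}\colon V^{\Delta}\to A^{\Delta}$.
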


\begin{proof}
Since the action factors through the finite group $\Delta = \mathrm{Gal}(K/\Q)$, we have $H^0(\Delta, V) = V^{G_{\Q}}$. Consider the short exact sequence $0 \to T \to V \to A \to 0$, which induces the exact sequence
\begin{equation*}
H^0(\Delta, V)
\longrightarrow
H^0(\Delta, A)
\longrightarrow
H^1(\Delta, T).    
\end{equation*}
Since $\Delta$ is finite and $p \nmid \Delta$,  Proposition~1.6.2 of \cite{NSW2008} implies that $H^1(\Delta, T)=0$. Therefore, $H^0(\Delta, A)=0$, and hence $A^{G_{\Q}}=0$.

\end{proof}

\begin{lemma}\label{3.2}
\normalfont If $(\rho,V)$ is a non-trivial irreducible Artin representation, then $H^0(G_{\Q}, A^*) = 0$.    
\end{lemma}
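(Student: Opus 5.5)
We need to show $(A^*)^{G_\Q}=0$, where $A^*=V^*/T^*$ and $T^*=\Hom(A,\mu_{p^\infty})$. The plan is to first identify $A^*$ concretely, then kill its invariants using the action of $G_{\Q(\mu_{p^\infty})}$ together with the fact that the remaining $\Gamma$-action is a nontrivial twist.

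\textbf{Step 1: identify $A^*$.} Since $T^*\cong \Hom_{\OO}(T,\OO)\otimes \Z_p(1)$ as $G_\Q$-modules, we get
\[
A^* \cong \Hom_{\OO}(T,\OO)\otimes_{\OO}(\FF/\OO)(1).
\]
That is, $A^*$ is the discrete module attached to $\rho^\vee\otimes\chi_{\mathrm{cyc}}$. Here $\rho^\vee$ factors through $\Delta$, and $\chi_{\mathrm{cyc}}$ is the $p$-adic cyclotomic character.

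\textbf{Step 2: reduce to $\Gamma$-invariants.} Let $H:=G_{K(\mu_{p^\infty})}$. Then $H$ acts trivially on $A^*$, because it acts trivially on $\rho$ and on $\mu_{p^\infty}$. So $(A^*)^{G_\Q}$ is the fixed module of the quotient group $\mathcal{G}=\Gal(K(\mu_{p^\infty})/\Q)$. Since $\mu_p\subset K$, this group sits in
\[
1\to \Gal(K(\mu_{p^\infty})/K)\to\mathcal G\to\Delta\to 1,
\]
and $\Gal(K(\mu_{p^\infty})/K)\cong 1+p^m\Z_p$ for some $m\ge1$, which is pro-$p$ and infinite. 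On this normal subgroup $\rho^\vee$ is trivial, so $\gamma$ acts on $A^*$ by the scalar $\chi_{\mathrm{cyc}}(\gamma)=u\ne1$.

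\textbf{Step 3: compute the invariants of $\gamma$.} Take $\gamma$ a topological generator with $u=\chi_{\mathrm{cyc}}(\gamma)$, so $u-1$ has $p$-adic valuation $m$. The $\gamma$-invariants of $A^*\cong(\FF/\OO)^d$ are then $A^*[u-1]=A^*[p^m]$. This is finite but \emph{not} zero. So the naive argument used in Lemma~\ref{0.6} (infinite order of $\chi_{\mathrm{cyc}}$ forcing vanishing) only works for lattices, not for discrete modules, and extra input is needed.

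\textbf{Step 4: use irreducibility on the finite piece.} The remaining $\Delta$-action is where irreducibility and nontriviality of $\rho$ enter. We know $(A^*)^{G_\Q}\subseteq A^*[p^m]\cong T^*/p^mT^*$, and the latter is the reduction of $\rho^\vee\otimes\chi_{\mathrm{cyc}}$ on which $G_{K(\mu_{p^m})}$ acts trivially. The group $G_\Q$ acts on it through $\Gal(K(\mu_{p^m})/\Q)$, whose order may be divisible by $p$. The cleanest route is to filter by $A^*[\pi]\subseteq A^*[\pi^2]\subseteq\cdots$. Each graded piece is $\bar\rho^\vee\otimes\bar\omega$ over the residue field $k$, where $\bar\omega$ is the mod $p$ cyclotomic character, i.e. the Teichmüller character. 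This character factors through $\Delta$ because $\mu_p\subset K$.

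\textbf{Step 5: reduce to the first layer.} If $(A^*)^{G_\Q}\ne0$, it is a nonzero $\OO$-torsion module, so it contains a nonzero element killed by $\pi$. Hence it suffices to show $H^0(G_\Q,A^*[\pi])=0$, which is $\Hom_\Delta(\mathbf 1,\bar\rho^\vee\otimes\bar\omega)$. Since $p\nmid|\Delta|$, reduction mod $\pi$ preserves the decomposition into irreducibles. So this Hom group is nonzero iff $\rho\cong\omega$ (the Teichmüller character lifted to $\FF$) after reduction, i.e. iff $\rho$ is the odd character $\omega$.

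\textbf{Step 6: exclude $\rho=\omega$; the main obstacle.} This last case is where I expect the difficulty. The hypothesis "non-trivial irreducible" does not literally exclude $\rho=\omega$, so I would exclude it using $d^+(\rho)=1$ from Assumption~1. Indeed, $\omega$ is odd, since $\omega(-1)=-1$, so $d^+(\omega)=0$, contradicting $d^+(\rho)=1$. More generally, any one-dimensional $\rho$ with $d^+=1$ is even and hence differs from $\omega$. I would make this use of Assumption~1 explicit in the proof.
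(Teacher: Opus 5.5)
Your proof is correct, using the standing Assumption~1, but it takes a different route from the paper. The paper writes $V^*\cong U\otimes\kappa$, with $U=\rho^\vee\otimes\omega$ an Artin representation and $\kappa$ factoring through $\Gamma$. It deduces $(V^*)^{G_\Q}=0$ from $U^\Delta=0$, and then passes to $A^*$ by quoting Lemma~\ref{3.1}. You instead work with the discrete module itself: you reduce to $A^*[\pi]\cong\bar\rho^\vee\otimes\bar\omega$ and use semisimplicity of $k[\Delta]$-modules (from $p\nmid|\Delta|$) to show that a nonzero invariant forces $\rho\cong\omega$.

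Your route makes explicit two points that the paper's argument glosses over. First, Lemma~\ref{3.1} rests on $H^1(\Delta,T)=0$, i.e.\ on the action factoring through a finite group of order prime to $p$. That fails for $T^*$: the paper's assertion that the action on $V^*$ factors through $\Delta$ is incorrect, since $\kappa$ has infinite pro-$p$ image. As your Step~3 indicates, the implication $(V^*)^{G_\Q}=0\Rightarrow(A^*)^{G_\Q}=0$ can genuinely fail. For $\rho=\omega$ one has $A^*\cong(\FF/\OO)(\kappa)$ and $(A^*)^{G_\Q}=(\FF/\OO)[p]\neq 0$, although $(V^*)^{G_\Q}=0$. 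Second, the step ``$U$ irreducible, hence $U^\Delta=0$'' silently requires $U\neq\mathbf 1$, i.e.\ $\rho\not\cong\omega$. That is exactly the case you isolate and exclude via $d^+(\rho)=1$ and the oddness of $\omega$, so you are right that the lemma as literally stated needs Assumption~1.

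The paper's route is shorter and can be repaired. $G_{\Q_\infty}$ acts on $A^*$ only through $U$, hence through $\Gal(K_\infty/\Q_\infty)\cong\Delta$. So $(A^*)^{G_\Q}\subseteq(A^*)^{G_{\Q_\infty}}$, and the latter vanishes by the argument of Lemma~\ref{3.1} applied to a lattice in $U$, once $U\neq\mathbf 1$. Your mod-$\pi$ argument reaches the same conclusion without the decomposition $\chi_{\mathrm{cyc}}=\omega\kappa$, and it identifies $\rho\cong\omega$ as the only obstruction.
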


\begin{proof}
From \cite[p.~285]{GV}, the dual representation $V^*$ can be represented as $V^* \cong U \otimes \kappa$, where $U$ is an Artin representation and $\kappa$ factors through
$\Gamma=\mathrm{Gal}(\Q_\infty/\Q)$. Hence $(V^*)^{\Delta}= U^{\Delta}$. Since $V$ is irreducible, $U$ is also irreducible. Hence, $U^\Delta = 0$, and consequently $(V^*)^\Delta=0$. Since the action of $V^*$ also factors through $\Delta$, it follows that $(V^*)^{G_{\Q}}=0$. By Lemma~\ref{3.1}, we have $H^0(G_{\Q}, A^*) = 0$.   
\end{proof}

\begin{theorem}\label{3.3}
 \normalfont The restriction map
 \begin{equation*}
 r:\mathrm{Sel}^{\mathrm{g}}_{\Q}(A,A_\wp^+)\longrightarrow
\mathrm{Sel}^{\mathrm{GV}}_{\Q_\infty}(A,A_\wp^+)^\Gamma    
 \end{equation*}
is injective and has finite co-kernel.
\end{theorem}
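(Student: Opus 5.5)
We follow the standard Mazur-style control argument and factor $r$ through $\mathrm{Sel}^{\mathrm{GV}}_{\Q}(A,A_\wp^+)$. By Proposition~\ref{prop:main}, the inclusion $\mathrm{Sel}^{\mathrm{g}}_{\Q}(A,A_\wp^+)\hookrightarrow \mathrm{Sel}^{\mathrm{GV}}_{\Q}(A,A_\wp^+)$ is injective with finite cokernel, bounded by $\prod_{\omega\in\Sigma,\omega\nmid p} c_\omega(A)$. So it suffices to show that the restriction
\[
s:\mathrm{Sel}^{\mathrm{GV}}_{\Q}(A,A_\wp^+)\longrightarrow \mathrm{Sel}^{\mathrm{GV}}_{\Q_\infty}(A,A_\wp^+)^\Gamma
\]
is injective with finite cokernel. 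A composite of an injection with finite cokernel and another such map has the same two properties.

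For $s$ we use the fundamental diagram. Its top row is
\[
0\to \mathrm{Sel}^{\mathrm{GV}}_{\Q}\to H^1(\Q_\Sigma/\Q,A)\to \mathcal{P}_{\Q}:=\bigoplus_{\omega\in\Sigma}H^1(G_\omega,A)/H^1_{\mathrm{GV}}(G_\omega,A).
\]
Its bottom row is the analogous sequence over $\Q_\infty$ with $\Gamma$-invariants taken. The vertical maps are $s$, the global restriction $h$, and the local restriction $g=\oplus g_\omega$. By the snake lemma, $\ker s\subset\ker h$ and $\#\mathrm{coker}\, s\le \#\mathrm{coker}\, h\cdot\#\ker g$.

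By inflation-restriction, $\ker h=H^1(\Gamma,A^{G_{\Q_\infty}})$ and $\mathrm{coker}\,h\subset H^2(\Gamma,A^{G_{\Q_\infty}})=0$, since $\Gamma\cong\Z_p$ has cohomological dimension $1$. To kill $\ker h$ we show $A^{G_{\Q_\infty}}=0$. Since $[K:\Q]$ is prime to $p$, $K\cap\Q_\infty=\Q$, so $G_{\Q_\infty}$ surjects onto $\Delta$. Hence $A^{G_{\Q_\infty}}=A^{G_\Q}$. This vanishes by Lemma~\ref{3.1}, because $V^{G_\Q}=0$ for $\rho$ irreducible and nontrivial. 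We are implicitly assuming $\rho\ne 1$; if $\rho$ is trivial, the condition $d^+(\rho)=1$ together with $V^+_\wp$ forces a degenerate case that can be treated directly. Thus $h$ is an isomorphism, so $s$ is injective and $\#\mathrm{coker}\,s\le\#\ker g$.

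It remains to bound $\ker g_\omega$ for each $\omega\in\Sigma$; this is the main obstacle. Note that primes $\ell\ne p$ are finitely decomposed in $\Q_\infty$.

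\textbf{Primes $\omega\nmid p$.} There $H^1(G_\omega,A)/H^1_{\mathrm{GV}}$ embeds into $H^1(I_\omega,A)$. The kernel of the map to $\bigoplus_{v_\infty\mid\omega}H^1(I_{v_\infty},A)$ is controlled by $H^1$ of $I_\omega/I_{v_\infty}$, which is trivial because $\Q_\infty/\Q$ is unramified at $\ell$. Hence $\ker g_\omega$ is contained in a subquotient of $H^1(G_{v_\infty}/I\text{-part},\cdot)$ and is finite; in fact it vanishes for $\ell\ne p$.

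\textbf{The prime $\p$ above $p$.} Here $H^1(G_\p,A)/H^1_{\mathrm{GV}}$ embeds into $H^1(I_\p,A^-_\wp)$. The kernel of $H^1(I_\p,A_\wp^-)\to H^1(I_{\p_\infty},A_\wp^-)$ is $H^1(I_\p/I_{\p_\infty},(A^-_\wp)^{I_{\p_\infty}})$. Since $I_\p/I_{\p_\infty}\cong\Gamma$ is procyclic, this is a quotient of $(A_\wp^-)^{I_{\p_\infty}}$ by $(\gamma-1)$. Because $\Delta_\wp$ has order prime to $p$, the module $(A_\wp^-)^{I_{\p_\infty}}=(A^-_\wp)^{I_\p}$ has trivial $\gamma$-action. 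So we need it to be finite, and we will use that $(V^-_\wp)^{I_\p}=0$ (or a finiteness statement in that direction) as in Greenberg--Vatsal, under the hypothesis of Lemma~\ref{0.6}.

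A subtlety is that the relevant kernel is really the kernel on the $G$-level quotient, not only on the inertia level. So we would rather work with $H^1(G_\p,A)/H^1_{\mathrm{GV}}\to H^1(G_{\p_\infty},A)/H^1_{\mathrm{GV}}$, and bound its kernel by $H^1(\Gamma_\p,(A)^{G_{\p_\infty}})$ plus the contribution of $H^1(G_\p,A^+_\wp)$-type terms. Each of these is finite by the Euler characteristic and torsion-freeness computations of Lemma~\ref{0.6}: there are no $G_{\p_n}$-invariants in the Tate twists, and the relevant $H^0$'s over $\p_\infty$ are finite.

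Combining the local bounds gives finiteness of $\ker g$, hence finiteness of $\mathrm{coker}\,r$, while injectivity of $r$ follows from the injectivity of both factors.
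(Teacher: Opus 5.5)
Your proof is correct. It differs from the paper's mainly in how the argument is organized.

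\textbf{Comparison with the paper.} The paper runs a single snake-lemma argument on one diagram. Its top row uses the $H^1_{\mathrm g}$ conditions over $\Q$, and its bottom row uses the $H^1_{\mathrm{GV}}$ conditions over $\Q_\infty$. After killing $\ker s$ and $\operatorname{coker} s$ by inflation--restriction, it obtains $\operatorname{coker}(r)\cong\ker(\phi)\cap\operatorname{img}(\lambda)$. It then simply \emph{asserts} that $\ker(\phi)=\bigoplus_{\omega\nmid p}H^1_{\mathrm{GV}}(G_\omega,A)/H^1_{\mathrm g}(G_\omega,A)$.

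You instead factor $r$ as $\mathrm{Sel}^{\mathrm g}_{\Q}\hookrightarrow\mathrm{Sel}^{\mathrm{GV}}_{\Q}\xrightarrow{s}\mathrm{Sel}^{\mathrm{GV}}_{\Q_\infty}(A,A_\wp^+)^\Gamma$. You handle the first arrow by Proposition~\ref{prop:main} and use GV conditions on both rows for the second. The paper's unproved claim about $\ker(\phi)$ is equivalent to injectivity of your local maps $g_\omega$, together with Lemma~\ref{0.6} at $\p$. So your local analysis supplies the justification the paper skips:
\begin{itemize}
\item For $\omega\nmid p$, one has $I_{v_\infty}=I_\omega$ because $\Q_\infty/\Q$ is unramified at $\ell$, so $g_\omega$ is injective.
\item At $\p$, the inertia-level kernel is $H^1(\Gamma,(A_\wp^-)^{I_{\p_\infty}})=(A_\wp^-)^{I_\p}$. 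This vanishes under the hypothesis $H^0(I_{\p_n},A_\wp^-)=0$.
\end{itemize}
Your route therefore gives the sharper fact that $s$ is an isomorphism. It also feeds into Lemma~\ref{4.3} just as well as the paper's route. Surjectivity of $\lambda$ makes $\mathrm{Sel}^{\mathrm{GV}}_\Q\to\bigoplus_{\omega\nmid p}H^1_{\mathrm{GV}}/H^1_{\mathrm g}$ surjective, so $\#\operatorname{coker}(r)$ equals the full product of the $c_\omega(A)$.

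\textbf{Points to clean up.}

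The ``subtlety'' you raise at $\p$ is not real. By the definition of $H^1_{\mathrm{GV}}$, the quotient $H^1(G_\p,A)/H^1_{\mathrm{GV}}(G_\p,A)$ is a subgroup of $H^1(I_\p,A_\wp^-)$. The same holds over $\p_\infty$, compatibly with restriction. Hence injectivity of $H^1(I_\p,A_\wp^-)\to H^1(I_{\p_\infty},A_\wp^-)$ already gives $\ker g_\p=0$. The alternative you sketch via $H^1(G_\p,A_\wp^+)$-type terms should be dropped; as written it proves nothing.

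Note also that $(A_\wp^-)^{I_\p}=(V_\wp^-)^{I_\p}/(T_\wp^-)^{I_\p}$ is divisible, because the image of inertia in $\Delta$ has order prime to $p$. It is therefore finite only if it is zero. So the hypothesis of Lemma~\ref{0.6} is genuinely needed here; the paper also uses it implicitly.

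Your sentence about $\omega\nmid p$ (``subquotient of $H^1(G_{v_\infty}/I\text{-part},\cdot)$'') should be replaced by the clean reason $I_{v_\infty}=I_\omega$. The archimedean place contributes nothing, since $p$ is odd.

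Finally, the trivial representation cannot be ``treated directly''. For $\rho=1$ one has $A_\wp^-=0$. The inflation of $\Hom(\Gamma,\FF/\OO)$ then lies in $\mathrm{Sel}^{\mathrm g}_\Q(A,A_\wp^+)$ and is killed by $r$, so $r$ has infinite kernel. You should simply assume $\rho\neq 1$, as the paper does.
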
\label{theorem:main}

\begin{proof}
Consider the following inflation-restriction sequence
\begin{equation*}
0 \to H^1(\Gamma,A^{G_{\Q_\infty}}) \to H^1(G_{\Q,\Sigma},A) \xrightarrow{\mathrm{res}} H^1(G_{\Q_\infty,\Sigma},A)^\Gamma \to H^2(\Gamma,A^{G_{\Q_\infty}}).    
\end{equation*}

Since $\rho$ is non-trivial and irreducible, we have $V^{G_{\Q}}=0$. Therefore, $A^{G_{\Q}}=0$ by Lemma~\ref{3.1}. Let $K_n = K\Q_n$ be the $n$-th layer of the cyclotomic $\mathbf{Z}_p$-extension $K_\infty/K$. Since $p \nmid |\Delta|$, we have $\Delta \simeq \mathrm{Gal}(K_n/\Q_n)$.
 Now, for any $n$, the action of $G_{\Q_n}$ factors through
$\mathrm{Gal}(K_n/\Q_n)$, and hence $A^{G_{\Q_n}} =A^{\mathrm{Gal}(K_n/\Q_n)} = A^\Delta=A^{G_\Q}.$ Passing the limit over $n$, we obtain $A^{G_{\Q_\infty}} = A^{G_\Q} = 0$. Therefore, both $H^1(\Gamma,A^{G_{\Q_\infty}})$ and $H^2(\Gamma,A^{G_{\Q_\infty}})$ are zero, and from the restriction map we get an isomorphism $H^1(G_{\Q,\Sigma},A)  \cong H^1(G_{\Q_\infty,\Sigma},A)^\Gamma$. Now, consider the commutative diagram

\begin{tikzcd}[column sep=2.2em,row sep=3em]
0 \ar[r] &
\mathrm{Sel}^{\mathrm{g}}_{\Q}(A,A_\wp^+) \ar[r] \ar[d,"r"] &
H^1(G_{\Q,\Sigma},A) \ar[r,"\lambda"] \ar[d,"s"] &
{\bigoplus\limits_{\omega\in\Sigma}
\frac{H^1(G_\omega,A)}
     {H^1_{\mathrm{g}}(G_\omega,A)}}
\ar[d,"\phi"] 
\\
0 \ar[r] &
\mathrm{Sel}^{\mathrm{GV}}_{\Q_\infty}(A,A_\wp^+)^\Gamma \ar[r] &
H^1(G_{\Q_\infty,\Sigma},A)^\Gamma \ar[r,"\mu"] &
{\mathop{\bigoplus}\limits_{\omega\in\Sigma}
\left(
\frac{H^1(G_{\omega,\infty},A)}
     {H^1_{\mathrm{GV}}(G_{\omega,\infty},A)}
\right)^\Gamma}
\end{tikzcd}

 The snake lemma \cite[Theorem 1]{lemmermeyer2011snakelemma} gives an exact sequence
\begin{equation*}
 \ker(s)
\longrightarrow \ker(\phi) \cap \operatorname{img}(\lambda)
\longrightarrow \operatorname{coker}(r)
\longrightarrow \operatorname{coker}(s)
\end{equation*}
On the other hand, both $\ker(s)$ and $\operatorname{coker}(s)$ are trivial, so the exact sequence gives an isomorphism $\operatorname{coker}(r) \cong   \ker(\phi) \cap \operatorname{img}(\lambda)$. The kernel of $\phi$ is $\mathop{\bigoplus}\limits_{\substack{\omega\in\Sigma,\omega\nmid p}}
\frac{H^1_{\mathrm{GV}}(G_\omega,A)}
     {H^1_\mathrm{g}(G_\omega,A)}$, which is finite of order $\mathop{\prod}\limits_{\substack{\omega\in\Sigma,\,\omega\nmid p}}
c_\omega(A)$. Hence $\operatorname{coker}(r)$ is finite. Next, since $s$ is injective, it is clear that $r$ is also injective. This completes the proof of the theorem.
\end{proof}

\begin{lemma} \label{surjective}
\normalfont The map $\lambda$ is surjective.    
\end{lemma}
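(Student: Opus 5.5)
The plan is to read off the surjectivity of $\lambda$ from the Poitou--Tate exact sequence, as in Greenberg's and Longo--Vigni's arguments. For the Selmer structure $\{H^1_{\mathrm g}(G_\omega,A)\}_{\omega\in\Sigma}$ on $A$, let $\mathcal{L}^\perp_\omega\subset H^1(G_\omega,T^*)$ be the orthogonal complements under local Tate duality. Poitou--Tate gives an exact sequence
\begin{equation*}
H^1(G_{\Q,\Sigma},A)\xrightarrow{\ \lambda\ }\bigoplus_{\omega\in\Sigma}\frac{H^1(G_\omega,A)}{H^1_{\mathrm g}(G_\omega,A)}\longrightarrow \mathfrak{S}(T^*)^\vee ,
\end{equation*}
where $\mathfrak{S}(T^*):=\ker\bigl(H^1(G_{\Q,\Sigma},T^*)\to\bigoplus_{\omega\in\Sigma}H^1(G_\omega,T^*)/\mathcal{L}^\perp_\omega\bigr)$ is the compact dual Selmer group. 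So $\operatorname{coker}(\lambda)$ injects into $\mathfrak{S}(T^*)^\vee$, and it suffices to show $\mathfrak{S}(T^*)=0$. Since $\mathfrak{S}(T^*)$ is a finitely generated $\OO$-module, I will show that it is torsion-free and has rank zero.

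\emph{Torsion.} The $\OO$-torsion of $H^1(G_{\Q,\Sigma},T^*)$ is the image of $H^0(G_{\Q},A^*)$ under the connecting map of $0\to T^*\to V^*\to A^*\to 0$. By Lemma~\ref{3.2}, $H^0(G_{\Q},A^*)=0$. Hence $H^1(G_{\Q,\Sigma},T^*)$, and therefore its submodule $\mathfrak{S}(T^*)$, is torsion-free.

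\emph{Rank.} The rank of $\mathfrak{S}(T^*)$ equals the $\OO$-corank of the discrete dual Selmer group $\mathrm{Sel}(A^*)$, defined using the local conditions $\mathcal{L}_\omega^\perp\otimes\FF/\OO$. I would compare coranks with the Greenberg--Wiles formula:
\begin{multline*}
\operatorname{corank}\mathrm{Sel}^{\mathrm g}_{\Q}(A,A_\wp^+)-\operatorname{corank}\mathrm{Sel}(A^*)
= \operatorname{corank}H^0(G_\Q,A)-\operatorname{corank}H^0(G_\Q,A^*)\\
+\sum_{\omega\in\Sigma,\ \omega\neq\infty}\bigl(\operatorname{corank}H^1_{\mathrm g}(G_\omega,A)-\operatorname{corank}H^0(G_\omega,A)\bigr)-\operatorname{corank}H^0(G_\infty,A).
\end{multline*}
The global $H^0$ terms vanish by Lemmas~\ref{3.1} and \ref{3.2}. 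The archimedean term equals $d^+(\rho)=1$. For $\omega\nmid p$, Lemma~\ref{0.5} gives $\operatorname{corank}H^1_{\mathrm g}=\operatorname{corank}H^1_{\mathrm{GV}}=\operatorname{corank}H^0(G_\omega,A)$, using the unramified description and the local Euler characteristic. At $p$ the summand should be $1$: the Euler characteristic gives $\operatorname{corank}H^1(G_p,A)=d+\operatorname{corank}H^0(G_p,A)$, and cutting down by the condition ``unramified in $A_\wp^-$'' leaves corank $\dim V_\wp^+=1$ plus $\operatorname{corank}H^0(G_p,A)$. The right-hand side is then $0$. Since $\mathrm{Sel}^{\mathrm g}_{\Q}(A,A_\wp^+)$ is finite by \cite[Theorem~1 and Proposition~4.2]{GV}, it follows that $\mathrm{Sel}(A^*)$ has corank $0$. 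Thus $\mathfrak{S}(T^*)$ has rank $0$; being torsion-free, it vanishes, and $\lambda$ is surjective.

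The main obstacle is the corank bookkeeping at $p$. One must compute $\operatorname{corank}H^1_{\mathrm g}(G_p,A)$ exactly, and this requires controlling the unramified cohomology of $A_\wp^-$ and the invariants $H^0(G_p,A)$ and $H^0(G_p,A_\wp^-)$, using Assumption~1, the hypothesis of Lemma~\ref{0.6}, and the fact that $G_p$ acts through a finite group of order prime to $p$. I would first check, via Lemma~\ref{0.6}, that $H^1_{\mathrm g}(G_p,A)=H^1_{\mathrm{GV}}(G_p,A)$, and then compute its corank from the exact sequence attached to $0\to A_\wp^+\to A\to A_\wp^-\to 0$, using $H^2(G_p,A_\wp^+)=0$, which follows from Tate duality as in the proof of Lemma~\ref{0.6}.
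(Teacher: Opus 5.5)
Your proof is correct, but it follows a different route from the paper's. The paper never opens up Poitou--Tate. It quotes \cite[Proposition~3.2.1]{Greenberg2010} as a black box and checks its hypotheses:
\begin{itemize}
\item LEO$(A)$, via $H^2(\Q_\Sigma/\Q,A)=0$ from \cite{GV};
\item a cotorsion condition, deduced from \cite[Proposition~4.2]{GV} somewhat loosely, since it mixes the $\Q$-level map $\lambda$ with the map over $\Q_\infty$;
\item the absence of subquotients $\mu_p$ in $A[\pi]$, from residual irreducibility.
\end{itemize}

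You instead prove the needed special case directly. Poitou--Tate embeds $\operatorname{coker}(\lambda)$ into $\mathfrak S(T^*)^\vee$ with no hypothesis at all. Torsion-freeness of $\mathfrak S(T^*)$ comes from $H^0(\Q,A^*)=0$. Rank zero comes from the Greenberg--Wiles count, with finiteness of $\mathrm{Sel}^{\mathrm g}_{\Q}(A,A_\wp^+)$ as input: the archimedean term $-d^+(\rho)=-1$ exactly cancels the term $\dim V_\wp^+=1$ at $p$.

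Your route gives a self-contained proof that needs neither LEO$(A)$ nor any $\Lambda_\OO$-cotorsion statement. It uses only that $A[\pi]$ has no $G_\Q$-\emph{quotient} isomorphic to $\mu_p$, and it shows exactly where $d^+(\rho)=1$ enters. The paper's route is shorter, but it depends on matching hypotheses stated for general coefficient rings.

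Two small points.

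First, the corank computation at $p$ does not need Lemma~\ref{0.6}, whose hypothesis $H^0(I_{\p_n},A_\wp^-)=0$ is not part of this lemma. By definition $H^1_{\mathrm g}(G_p,A)$ is the image of $H^1_{\mathrm g}(G_p,V)$, so its corank equals $\dim H^1_{\mathrm g}(G_p,V)=1+\dim V^{G_p}$. This follows directly from $H^2(G_p,V_\wp^{\pm})=0$ and $\dim H^1_{\mathrm{ur}}(G_p,V_\wp^-)=\dim (V_\wp^-)^{G_p}$.

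Second, for $H^0(\Q,A^*)=0$, do not rely on the paper's reduction of Lemma~\ref{3.2} to Lemma~\ref{3.1}. That reduction is invalid, because $G_\Q$ does not act on $A^*$ through $\Delta$. Argue instead that $H^0(\Q,A^*)[\pi]\cong\Hom_{G_\Q}(A[\pi],\mu_p)$. This group vanishes because $A[\pi]$ is irreducible and $\rho$ is not the Teichm\"uller character: that character is odd, whereas $d^+(\rho)=1$.
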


\begin{proof}
We use \cite[Proposition~3.2.1]{Greenberg2010} to show the surjectivity of $\lambda$.  First, since $A \cong (\FF/\OO)^d$,
the module $A$ is divisible as a $\mathbf \OO$-module. Furthermore, by \cite[Proposition 2.2 and Corollary 2.7]{GV}, we have
\begin{equation*}
H^2(\Q_\Sigma/\Q,A)=0.    
\end{equation*}
Hence the weak Leopoldt condition LEO(A) is satisfied. Next, \cite[Proposition 4.2]{GV} shows that the cokernel of the global-to-local map $\phi_n$, which is defined using the local conditions $H^1_{\text{GV}}(G_\omega,A)$ is a cotorsion $\Lambda_\OO$-module. On the other hand, the global-to-local map $\lambda$ is defined using local conditions $H^1_{\text{g}}(G_\omega,A)$. For every place $\omega \in \Sigma$, the subgroup $H^1_{\text{g}}(G_\omega,A)$ has finite index in $H^1_{\text{GV}}(G_\omega,A)$ by Lemmas \ref{0.5} and \ref{0.6}. Therefore, the quotients $H^1(G_\omega,A)/H^1_{\mathrm{g}}(G_\omega,A)$ and $H^1(G_\omega,A)/H^1_{\mathrm{GV}}(G_\omega,A)$ differ by a finite group. Since $\Sigma$ is finite, the targets of $\lambda$ and $\phi_n$  differ by a finite group. Consequently, their cokernels differ by a finite group. Since the cokernel of $\phi_n$ is a cotorsion $\Lambda_\OO$-module\cite[Proposition 4.2]{GV}, the same is true for the cokernel of $\lambda$. Thus hypotheses $(1)$ and $(2)$ of \cite[Proposition~3.2.1]{Greenberg2010} are satisfied.
 
Finally, since $p \nmid |\Delta|$, the reduction of $\rho$ modulo $(\pi)$ remains irreducible over $\OO/\pi\OO$. 

Hence $A[\pi] \cong T/\pi T$ is irreducible as a $G_{\Q}$-module. In particular, it has no subquotient isomorphic to $\mu_p$. Hence \cite[Proposition~3.2.1]{Greenberg2010} implies that $\lambda$ is surjective.
\end{proof}

\section{Euler Characteristic Formula}
In this section, we prove Theorem A. Before giving the proof, we first establish several auxiliary lemmas. To simplify the notation, let $
S:=\mathrm{Sel}^{\mathrm{GV}}_{\Q_\infty}(A,A_\wp^+)$ and $
X:= S^\vee = X^{\mathrm{GV}}_{\Q_\infty}(T,T_\wp^+)$.

\begin{lemma} \label{4.1}
\normalfont
Let $M$ be a cofinitely generated cotorsion $\Lambda_\OO$-module. Let $f(T)$ be a generator of the characteristic ideal of $M^\vee$. Assume that $M^\Gamma$ is finite. Then $M_{\Gamma}$ is finite, $f(0) \neq 0$ and 
\begin{equation*}
 \# \bigl (  \OO / f(0) \cdot \OO \bigr ) =   \frac{\# M^\Gamma}{\# M_\Gamma}.
\end{equation*}
\end{lemma}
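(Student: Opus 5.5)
We plan to pass to the Pontryagin dual and reduce to the structure theory of finitely generated torsion $\Lambda_\OO$-modules. Write $X := M^\vee$, a finitely generated torsion $\Lambda_\OO$-module, and identify a topological generator $\gamma$ of $\Gamma$ with $1+T$. Then $(M^\Gamma)^\vee \cong X_\Gamma = X/TX$ and $(M_\Gamma)^\vee \cong X^\Gamma = X[T]$. So the claim becomes: if $X/TX$ is finite, then $X[T]$ is finite, $f(0)\neq 0$, and
\[
\#\bigl(\OO/f(0)\OO\bigr) = \frac{\#(X/TX)}{\#X[T]}.
\]

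\textbf{Step 1: the elementary case.} We first verify the statement for an elementary module $E=\bigoplus_i \Lambda_\OO/(g_i)$, where each $g_i$ is either $\pi^{m}$ or a power of a distinguished irreducible polynomial. Multiplication by $T$ on $\Lambda_\OO/(g)$ fits into the exact sequence
\[
0\to (\Lambda_\OO/g)[T]\to \Lambda_\OO/g \xrightarrow{T} \Lambda_\OO/g \to \Lambda_\OO/(g,T)\to 0,
\]
and $\Lambda_\OO/(g,T)\cong \OO/g(0)\OO$. There are two cases.
\begin{itemize}
\item If $g(0)\neq 0$, then $T$ and $g$ are coprime, so multiplication by $T$ is injective. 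In this case $(\Lambda_\OO/g)[T]=0$ and the cokernel has order $\#\OO/g(0)\OO$.
\item If $g(0)=0$, then $T\mid g$, so $\Lambda_\OO/(g,T)\cong\OO$ is infinite.
\end{itemize}
Hence the finiteness of $E/TE$ forces $g_i(0)\neq 0$ for every $i$, which gives $f(0)=\prod g_i(0)\neq 0$ up to a unit, $E[T]=0$, and $\#E/TE=\#\OO/f(0)\OO$.

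\textbf{Step 2: passing from $X$ to $E$.} There is a pseudo-isomorphism $X\to E$ with finite kernel $K$ and finite cokernel $C$. We view multiplication by $T$ as an endomorphism of each term of $0\to K\to X\to E\to C\to 0$. For any endomorphism $\phi$ of a module $N$ with finite kernel and cokernel, set $q(N)=\#\mathrm{coker}\,\phi/\#\ker\phi$. This Herbrand-type quotient is multiplicative in short exact sequences, which follows from the snake lemma applied to the two short exact sequences obtained by splitting the four-term sequence. For a finite module $N$ we have $q(N)=1$, because $\ker\phi$ and $\mathrm{coker}\,\phi$ of an endomorphism of a finite group have equal order.

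The main obstacle is that finiteness of $X/TX$ must first be transferred to $E$ before $q(E)$ is even defined. We handle this by the snake lemma for $T$ acting on $0\to K\to X\to X'\to 0$ and on $0\to X'\to E\to C\to 0$, where $X'$ is the image of $X$ in $E$. Each of these sequences yields a six-term exact sequence in which the finite terms coming from $K$ and $C$ appear. From these sequences we get that $X'/TX'$ is finite, then that $E/TE$ is finite. By Step 1 it follows that $E[T]=0$, hence $X'[T]=0$, and hence $X[T]$ is finite, being an extension of a subgroup of $X'[T]$ by a subgroup of $K$. Thus all the quotients are defined, and
\[
q(X)=q(K)\,q(X')=q(X')=q(E)/q(C)=q(E)=\#\OO/f(0)\OO .
\]

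\textbf{Step 3: conclusion.} Dualizing back, $\#M_\Gamma=\#X[T]$ is finite, $f(0)\neq 0$ by Step 1 since $f$ and $\prod g_i$ generate the same characteristic ideal, and
\[
\frac{\#M^\Gamma}{\#M_\Gamma}=\frac{\#(X/TX)}{\#X[T]}=q(X)=\#\bigl(\OO/f(0)\OO\bigr),
\]
which is the asserted formula.
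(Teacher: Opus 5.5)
Your proof is correct. The reduction to $X=M^\vee$, the computation of kernel and cokernel of $T$ on each $\Lambda_\OO/(g)$, and the order of the final step are all sound. That step transfers finiteness from $X/TX$ to $X'/TX'$ and then to $E/TE$ via the two snake-lemma sequences, deduces $E[T]=0$, and only then uses multiplicativity of the quotient $q$.

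The paper gives no argument of its own and simply invokes Greenberg's Lemma~4.2 in \cite{GR5}. That lemma is proved by essentially this argument: the structure theorem, the elementary computation, and invariance of $\#(X/TX)/\#X[T]$ under pseudo-isomorphism. So you have in effect written out a self-contained version of the cited proof.
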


\begin{proof}
The result follows from \cite[Lemma~4.2]{GR5}.
\end{proof}

Note that $\Gamma$ has cohomological dimension one and hence $H^i(\Gamma, M) = 0$ for all $i \geq 2$. Therefore, $\frac{\# M^\Gamma}{\# M_\Gamma}$ is equal to the Euler characteristic $\chi(\Gamma,M) =
\# H^0(\Gamma, M)/ \# H^1(\Gamma, M)$. 



By \cite[Proposition~4.2]{GV}, $S$ is a cofinitely generated cotorsion $\Lambda_\OO$-module. Let $\mathfrak{F}$ denote its characteristic power series of $X$. Since $\mathrm{Sel}^{\mathrm{g}}_{\Q}(A,A_\wp^+)$ is finite \cite[Theorem 1]{GV}, it follows from Theorem \ref{3.3} that $S^\Gamma$ is also finite. Now, from Lemma~\ref{4.1} , we obtain

\begin{equation} \label{main}
 \# \bigl (  \OO / \mathfrak{F}(0) \cdot \OO \bigr ) =   \frac{\# S^\Gamma}{\# S_\Gamma}.
\end{equation}

\begin{lemma}
\label{4.3}
 $\# S^\Gamma  =  \# \mathrm{Sel}^{\mathrm{g}}_{\Q}(A,A_\wp^+) \cdot \mathop{\prod}\limits_{\substack{\omega\in\Sigma, \, \omega\nmid p}}
c_\omega(A) $
\end{lemma}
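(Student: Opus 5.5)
The plan is to read off $\# S^\Gamma$ from the commutative diagram in the proof of Theorem~\ref{3.3}, using the surjectivity of $\lambda$ (Lemma~\ref{surjective}). By Theorem~\ref{3.3}, $r$ is injective. Its cokernel is identified, via the snake lemma, with $\ker(\phi)\cap\operatorname{img}(\lambda)$; this uses that $s$ is an isomorphism because $A^{G_{\Q_\infty}}=0$. Hence
\begin{equation*}
\# S^\Gamma = \#\mathrm{Sel}^{\mathrm{g}}_{\Q}(A,A_\wp^+)\cdot \#\bigl(\ker(\phi)\cap \operatorname{img}(\lambda)\bigr).
\end{equation*}
Lemma~\ref{surjective} gives $\operatorname{img}(\lambda)$ equal to the whole target $\bigoplus_{\omega\in\Sigma} H^1(G_\omega,A)/H^1_{\mathrm g}(G_\omega,A)$. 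So $\ker(\phi)\cap\operatorname{img}(\lambda)=\ker(\phi)$, and it remains to compute $\#\ker(\phi)$ exactly.

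For $\ker(\phi)$ I would work one place at a time. The map $\phi$ is the direct sum of maps
\begin{equation*}
\phi_\omega: \frac{H^1(G_\omega,A)}{H^1_{\mathrm g}(G_\omega,A)} \to \Bigl(\frac{H^1(G_{\omega,\infty},A)}{H^1_{\mathrm{GV}}(G_{\omega,\infty},A)}\Bigr)^\Gamma,
\end{equation*}
where the right-hand side means the product over primes of $\Q_\infty$ above $\omega$.

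\emph{Primes $\omega\nmid p$.} The prime $\omega$ is finitely decomposed and $\Q_{\infty,v}/\Q_\omega$ is the unramified $\Z_p$-extension, so $I_{v_\infty}=I_\omega$. A class in $H^1(G_\omega,A)$ restricting to something trivial on $I_{v_\infty}$ is therefore trivial on $I_\omega$, i.e.\ lies in $H^1_{\mathrm{GV}}(G_\omega,A)$. Conversely, any class in $H^1_{\mathrm{GV}}(G_\omega,A)$ maps to zero. Hence $\ker\phi_\omega = H^1_{\mathrm{GV}}(G_\omega,A)/H^1_{\mathrm g}(G_\omega,A)$, which has order $c_\omega(A)$ by definition.

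\emph{The prime $p$.} By Lemma~\ref{0.6} (taking the hypothesis $H^0(I_{\p_n},A^-_\wp)=0$ as standing), $H^1_{\mathrm g}=H^1_{\mathrm{GV}}$ at $p$, so I must show injectivity of
\begin{equation*}
H^1(G_p,A)/H^1_{\mathrm{GV}}(G_p,A)\to H^1(G_{\p_\infty},A)/H^1_{\mathrm{GV}}(G_{\p_\infty},A).
\end{equation*}
Both quotients embed into $H^1(I_p,A^-_\wp)$ and $H^1(I_{\p_\infty},A^-_\wp)$ respectively. So the kernel is controlled by $\ker\bigl(H^1(I_p,A^-_\wp)\to H^1(I_{\p_\infty},A^-_\wp)\bigr)=H^1(I_p/I_{\p_\infty},(A^-_\wp)^{I_{\p_\infty}})$.

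Here $(A^-_\wp)^{I_{\p_\infty}}=0$. Indeed, $I_p/I_{\p_\infty}\cong\Gamma$ is pro-$p$ while the action on $A^-_\wp$ factors through a prime-to-$p$ group. So the action of $I_{\p_\infty}$ has the same invariants as that of $I_p$, and those vanish by the hypothesis of Lemma~\ref{0.6}. Thus the kernel at $p$ is trivial. The archimedean place contributes nothing since $p$ is odd.

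Summing up gives $\#\ker(\phi)=\prod_{\omega\in\Sigma,\,\omega\nmid p}c_\omega(A)$, and the lemma follows. The main obstacle is the local analysis at $p$: I must check that the quotient by the GV condition really embeds into $H^1(I,A^-_\wp)$ compatibly at both levels, and that the prime-to-$p$ argument for the inertia invariants is valid. At $\omega\nmid p$ I also must make sure that restriction to inertia is compatible with passing to the product over the primes above $\omega$.
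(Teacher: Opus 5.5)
Your proposal is correct and follows the paper's route: you get injectivity of $r$ and $\operatorname{coker}(r)\cong\ker(\phi)\cap\operatorname{img}(\lambda)$ from the diagram in Theorem~\ref{3.3}, replace $\operatorname{img}(\lambda)$ by the full target using Lemma~\ref{surjective}, and count via $0\to\mathrm{Sel}^{\mathrm g}_{\Q}(A,A_\wp^+)\to S^\Gamma\to\ker(\phi)\to 0$. The one difference is that you verify $\ker(\phi)=\bigoplus_{\omega\nmid p}H^1_{\mathrm{GV}}(G_\omega,A)/H^1_{\mathrm g}(G_\omega,A)$ place by place: away from $p$ via unramifiedness of $\Q_\infty$, and at $p$ via $(A^-_\wp)^{I_{\p_\infty}}=0$ (the pro-$p$ versus prime-to-$p$ argument) together with Lemma~\ref{0.6}, whereas the paper only asserts this identification in the proof of Theorem~\ref{3.3}.
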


\begin{proof}
By Lemma~\ref{surjective}, the map $\lambda$ is surjective. Then it follows from Theorem~\ref{3.3} that $\operatorname{coker}(r)\cong \operatorname{ker}(\phi)$. Since $r$ is injective and $\#\operatorname{coker}(r)
= \displaystyle
\prod_{\substack{\omega\in\Sigma,\, \omega\nmid p}}
c_\omega(A)$, The lemma now 
\vspace{-0.5cm} \\
follows from the exact sequence
\begin{equation*}
0 \longrightarrow \mathrm{Sel}^{\mathrm{g}}_{\Q}(A,A_\wp^+)
\longrightarrow S^\Gamma
\longrightarrow \ker(\phi)
\longrightarrow 0.    
\end{equation*}
\end{proof}

Next, we show that $S_\Gamma=0$, which is crucial for the proof of our main theorem.

\begin{lemma} \label{4.4}
\normalfont $S_\Gamma=0$.
\end{lemma}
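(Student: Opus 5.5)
We follow the strategy of Greenberg \cite[Section 4]{GR5} and Longo--Vigni \cite{LV}: realize $S_\Gamma = H^1(\Gamma,S)$ as part of a cohomology sequence attached to the defining sequence of $S$ over $\Q_\infty$, and show it vanishes. Write the defining sequence over $\Q_\infty$ as
\begin{equation*}
0 \to S \to H^1(G_{\Q_\infty,\Sigma},A) \xrightarrow{\ \mu_\infty\ } \mathcal{P}_\infty := \bigoplus_{\omega\in\Sigma}\frac{H^1(G_{\omega,\infty},A)}{H^1_{\mathrm{GV}}(G_{\omega,\infty},A)} .
\end{equation*}
The first step is to show that $\mu_\infty$ is surjective. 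This follows from \cite[Proposition~3.2.1]{Greenberg2010} exactly as in Lemma~\ref{surjective}, now applied with the GV local conditions. We use $H^2(\Q_\Sigma/\Q,A)=0$, the cotorsion statement of \cite[Proposition 4.2]{GV}, and the irreducibility of $A[\pi]$, so that no subquotient is isomorphic to $\mu_p$.

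Given surjectivity, we have a short exact sequence of discrete $\Gamma$-modules. Taking $\Gamma$-cohomology, and using $\mathrm{cd}(\Gamma)=1$, gives
\begin{equation*}
H^1(G_{\Q_\infty,\Sigma},A)^\Gamma \xrightarrow{\ \mu\ } \mathcal{P}_\infty^\Gamma \to H^1(\Gamma,S) \to H^1(\Gamma, H^1(G_{\Q_\infty,\Sigma},A)) \to H^1(\Gamma,\mathcal{P}_\infty).
\end{equation*}
It therefore suffices to prove two things: (a) $H^1(\Gamma,H^1(G_{\Q_\infty,\Sigma},A))=0$, and (b) $\mu$ is surjective onto $\mathcal{P}_\infty^\Gamma$.

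For (a), we use the Hochschild--Serre spectral sequence for $G_{\Q_\infty,\Sigma}\subset G_{\Q,\Sigma}$. Since $A^{G_{\Q_\infty}}=0$ (shown in the proof of Theorem~\ref{3.3}), the sequence yields an injection $H^1(\Gamma,H^1(G_{\Q_\infty,\Sigma},A))\hookrightarrow H^2(G_{\Q,\Sigma},A)$. The target vanishes by \cite[Proposition 2.2 and Corollary 2.7]{GV}.

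For (b), we factor $\mu$ through the diagram in the proof of Theorem~\ref{3.3}. The map $s$ is an isomorphism, and $\lambda$ is surjective by Lemma~\ref{surjective}. So $\mu\circ s=\phi\circ\lambda$, and it is enough to show that $\phi$ is surjective onto $\mathcal{P}_\infty^\Gamma$. This we check place by place.

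For $\omega\nmid p$, the local conditions at these places are the unramified ones, so the local term is $H^1(I_{\omega,\infty},A)^{G_{\omega,\infty}}$. Primes $\omega \nmid p$ are finitely decomposed in $\Q_\infty$, and $\Q_{\infty,\omega}/\Q_\omega$ is unramified. Using inflation--restriction and $\mathrm{cd}=1$ of the unramified quotient, we can identify the $\Gamma$-invariants with the image of $H^1(G_\omega,A)/H^1_{\mathrm{unr}}$. The cokernel of $\phi$ at $\omega$ then injects into $H^2$ of a procyclic group, which is $0$.

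For $\omega=p$, $\Q_{\infty,\p}/\Q_p$ is totally ramified. We must show that $H^1(G_p,A)/H^1_{\mathrm{g}}(G_p,A) \to \big(H^1(G_{\p_\infty},A)/H^1_{\mathrm{GV}}\big)^\Gamma$ is surjective. Both sides embed into $H^1(I,A_\wp^-)$-type groups. Here we use Lemma~\ref{0.6} to replace $H^1_{\mathrm{g}}$ by $H^1_{\mathrm{GV}}$, which requires $H^0(I_{\p},A_\wp^-)=0$. We then compare $H^1(G_p,A_\wp^-)\to H^1(G_{\p_\infty},A_\wp^-)^\Gamma$, whose cokernel lies in $H^2(\Gamma,(A_\wp^-)^{G_{\p_\infty}})=0$. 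We also need the map $H^1(G_{\p},A)\to H^1(G_{\p},A_\wp^-)$ to be surjective, which follows from $H^2(G_\p,A_\wp^+)=0$ by local duality as in Lemma~\ref{0.6}.

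\textbf{Main obstacle.} The main obstacle is this $p$-adic local surjectivity, where the quotient involves the inertia-restricted condition over $\Q_{\infty,\p}$. If a direct diagram chase fails there, the fallback is to argue that $\operatorname{coker}(\phi)$ is a quotient of a group killed by the vanishing $H^2$'s above.

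Combining (a) and (b) gives $H^1(\Gamma,S)=S_\Gamma=0$.
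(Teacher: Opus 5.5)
Your argument is correct in outline but takes a different route from the paper. The paper argues structurally. First, $S^\Gamma$ is finite, by finiteness of $\mathrm{Sel}^{\mathrm g}_{\Q}(A,A_\wp^+)$ and Theorem~\ref{3.3}, so $X_\Gamma$ is finite. Since $X$ is $\Lambda_\OO$-torsion, the rank formula $\operatorname{rank}_{\Lambda_\OO}X=\operatorname{rank}_\OO X_\Gamma-\operatorname{rank}_\OO X^\Gamma$ forces $X^\Gamma$ to be finite. Then \cite[Proposition~4.5]{GV} (no nonzero finite $\Lambda_\OO$-submodules) gives $X^\Gamma=0$, i.e.\ $S_\Gamma=0$. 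You instead run Greenberg's cohomological argument: surjectivity of the global-to-local map over $\Q_\infty$, $H^2(\Q_\Sigma/\Q,A)=0$, surjectivity of $\lambda$, and local surjectivity of $\phi$. The paper's proof is shorter and needs no further local analysis, but it outsources the real content to the ``no finite submodules'' theorem. In Greenberg's framework that theorem is itself deduced from the surjectivity over $\Q_\infty$ that you invoke. Your route is more self-contained, needs neither the rank formula nor finiteness of $S^\Gamma$, and shows exactly where the hypothesis $H^0(I_{\p_n},A_\wp^-)=0$ enters.

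That last point is where your sketch needs tightening, at the place you flagged. Lemma~\ref{0.6} is not what is needed at $p$. Since $H^1_{\mathrm g}\subset H^1_{\mathrm{GV}}$, surjectivity of $\phi$ at $p$ only requires $H^1(G_{\p_0},A)\to\bigl(H^1(G_{\p_\infty},A)/H^1_{\mathrm{GV}}(G_{\p_\infty},A)\bigr)^\Gamma$ to be onto. Your chain $H^1(G_{\p_0},A)\twoheadrightarrow H^1(G_{\p_0},A_\wp^-)\twoheadrightarrow H^1(G_{\p_\infty},A_\wp^-)^\Gamma$ is fine. You still need the identification $H^1(G_{\p_\infty},A)/H^1_{\mathrm{GV}}(G_{\p_\infty},A)\cong H^1(G_{\p_\infty},A_\wp^-)$.

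Surjectivity of $H^1(G_{\p_\infty},A)\to H^1(G_{\p_\infty},A_\wp^-)$ holds because $H^2(G_{\p_\infty},A_\wp^+)=0$. However, the restriction $H^1(G_{\p_\infty},A_\wp^-)\to H^1(I_{\p_\infty},A_\wp^-)$ has kernel $H^1(G_{\p_\infty}/I_{\p_\infty},(A_\wp^-)^{I_{\p_\infty}})$. In general, taking $\Gamma$-invariants therefore leaves an obstruction in $H^1\bigl(\Gamma,H^1(G_{\p_\infty}/I_{\p_\infty},(A_\wp^-)^{I_{\p_\infty}})\bigr)$. This is an $H^1$, so the vanishing $H^2$'s in your fallback do not control it.

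The hypothesis kills this term. Since $p\nmid|\Delta|$ and $I_{\p_0}/I_{\p_\infty}\cong\Gamma$ is pro-$p$, the images of $I_{\p_\infty}$ and $I_{\p_0}$ in $\Delta$ coincide. Hence $(A_\wp^-)^{I_{\p_\infty}}=(A_\wp^-)^{I_{\p_0}}=0$, the unramified kernel vanishes, and the identification follows. With this added, your proof is complete modulo the same citations the paper relies on, namely \cite[Proposition~3.2.1]{Greenberg2010} and \cite{GV}.
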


\begin{proof}
By \cite[Theorem~1]{GV} and Theorem~\ref{3.3}, the module
$S^\Gamma$ is finite. We have $\left(S^\Gamma\right)^\vee \cong X_\Gamma$. Hence, $X_\Gamma$ is finite. By \cite[Theorem~2]{GV}, $X$ is a finitely generated torsion $\Lambda_{\mathcal O}$-module. Therefore, $\operatorname{rank}_{\Lambda_{\mathcal O}}(X)=0$. Now, by applying \cite[Proposition~5.3.20]{NSW2008}, we obtain
\begin{equation*}
	\operatorname{rank}_{\Lambda_{\mathcal O}}(X)
	=
	\operatorname{rank}_{\mathcal O}(X_\Gamma)
	-
	\operatorname{rank}_{\mathcal O}(X^\Gamma).
\end{equation*}
Since $X_\Gamma$ is finite, its $\mathcal O$-rank is zero. It follows that $\operatorname{rank}_{\mathcal O}(X^\Gamma)=0$. Therefore, $X^\Gamma$ is finite. Next, by \cite[Proposition~4.5]{GV}, $X$ has no nontrivial finite
$\Lambda_{\mathcal O}$-submodules. Since $X^\Gamma$ is a finite
$\Lambda_{\mathcal O}$-submodule of $X$, we must have $X^\Gamma=0$. Finally, we have $\left(X^\Gamma\right)^\vee \cong S_\Gamma$. Therefore, $S_\Gamma=0$.
\end{proof}
We now prove the main result of this section, which relates the value
\(\mathfrak{F}(0)\) to the Selmer group and the Tamagawa factors.
\begin{theorem}
\normalfont \label{maintheorem} Let $(\rho,V,V_\wp^+)$ be a triple that satisfies Assumption-1. Assume that $H^0(I_{\mathfrak p_n},A_\wp^-)=0$. Then we have the following equality
\begin{equation*}
  \# \bigl (  \OO / \mathfrak{F}(0) \cdot \OO \bigr ) = \# \mathrm{Sel}^{\mathrm{g}}_{\Q}(A,A_\wp^+) \cdot \prod_{\substack {v \in \Sigma, \\ v \nmid p}} c_v(A)
\end{equation*}
where $c_v(A)$ is the Tamagawa number of $A$ at $v$.
\end{theorem}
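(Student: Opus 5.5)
The proof assembles the three ingredients already established in this section. First, we verify that Lemma~\ref{4.1} applies to $M=S=\mathrm{Sel}^{\mathrm{GV}}_{\Q_\infty}(A,A_\wp^+)$. By \cite[Proposition~4.2]{GV}, $S$ is a cofinitely generated cotorsion $\Lambda_\OO$-module. By \cite[Theorem~1]{GV}, $\mathrm{Sel}^{\mathrm{g}}_{\Q}(A,A_\wp^+)$ is finite. Theorem~\ref{3.3} shows that the restriction map $r$ into $S^\Gamma$ is injective with finite cokernel, so $S^\Gamma$ is finite. Lemma~\ref{4.1} then gives $\mathfrak{F}(0)\neq 0$ together with the identity \eqref{main}, namely $\#(\OO/\mathfrak{F}(0)\OO)=\#S^\Gamma/\#S_\Gamma$.

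Next, we evaluate the numerator and the denominator separately. For the denominator, Lemma~\ref{4.4} gives $S_\Gamma=0$, so $\#S_\Gamma=1$. For the numerator, Lemma~\ref{4.3} gives $\#S^\Gamma=\#\mathrm{Sel}^{\mathrm{g}}_{\Q}(A,A_\wp^+)\cdot\prod_{v\in\Sigma,\,v\nmid p}c_v(A)$. The hypothesis $H^0(I_{\mathfrak p_n},A_\wp^-)=0$ is used exactly here. Through Lemma~\ref{0.6} it guarantees that the local conditions at $p$ agree, so $\ker(\phi)$ receives contributions only from the places $v\nmid p$, each of order $c_v(A)$. Substituting both evaluations into \eqref{main} yields the claimed equality.

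The step requiring the most care is the identification $\operatorname{coker}(r)\cong\ker(\phi)$ inside Lemma~\ref{4.3}. The snake lemma in Theorem~\ref{3.3} only gives $\operatorname{coker}(r)\cong\ker(\phi)\cap\operatorname{img}(\lambda)$, and upgrading this to all of $\ker(\phi)$ relies on the surjectivity of $\lambda$ from Lemma~\ref{surjective}. That surjectivity in turn uses Greenberg's criterion \cite[Proposition~3.2.1]{Greenberg2010}: the weak Leopoldt condition, the cotorsion property of the cokernel, and the absence of a $\mu_p$ subquotient in $A[\pi]$.

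A second point to check is that $\ker(\phi)$ is precisely $\bigoplus_{v\nmid p}H^1_{\mathrm{GV}}(G_v,A)/H^1_{\mathrm g}(G_v,A)$. This requires that the local restriction maps at primes $v\nmid p$ have trivial kernel on $H^1(G_v,A)/H^1_{\mathrm{GV}}(G_v,A)$. Such primes are finitely decomposed in $\Q_\infty$, and the unramified quotient $G_v/I_v$ acts compatibly. Beyond confirming these inputs, the proof is a direct combination of the cited lemmas.
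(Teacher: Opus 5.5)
Your proposal is the paper's proof. You obtain \eqref{main} from Lemma~\ref{4.1} once Theorem~\ref{3.3} makes $S^\Gamma$ finite, then Lemma~\ref{4.4} gives $\#S_\Gamma=1$ and Lemma~\ref{4.3} (resting on the surjectivity of $\lambda$) gives $\#S^\Gamma$.

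One small tightening: Lemma~\ref{0.6} alone does not remove the $p$-component of $\ker(\phi)$, because you also need $H^1(G_{\p_0},A)/H^1_{\mathrm{GV}}(G_{\p_0},A)\to H^1(G_{\p_\infty},A)/H^1_{\mathrm{GV}}(G_{\p_\infty},A)$ to be injective. This holds because a class in the kernel has image in $H^1(I_{\p_0}/I_{\p_\infty},(A_\wp^-)^{I_{\p_\infty}})\subset H^1(I_{\p_0},A_\wp^-)$, and $(A_\wp^-)^{I_{\p_\infty}}=(A_\wp^-)^{I_{\p_0}}=0$: inertia acts through a group of order prime to $p$, while $I_{\p_0}/I_{\p_\infty}\cong\Gamma$ is pro-$p$. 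Likewise, at $v\nmid p$ the operative fact is simply $I_{v_\infty}=I_v$, since $\Q_\infty/\Q$ is unramified at $v$.
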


\begin{proof}
By Lemma~\ref{4.4}, we have $S_\Gamma=0$, and hence
$\#S_\Gamma=1$. The theorem now follows from
\eqref{main} and Lemma~\ref{4.3}.     
\end{proof}

\section*{Acknowledgement}
The author would like to thank the anonymous referee for the helpful comments and suggestions that have improved the presentation of the paper.
\bibliographystyle{amsplain}
\bibliography{mybib}

\end{document}